\DeclareMathAlphabet{\mathpzc}{OT1}{pzc}{m}{it} 
\newtheorem{Thm}{Theorem}[section]
\newtheorem{Cor}{Corollary}[section]
\newtheorem{Lem}{Lemma}[section]
\newtheorem{Prop}{Proposition}[section]
\newtheorem{Def}{Definition}[section]
\theoremstyle{definition}
\theoremstyle{definition}
\newcommand\X{\textsl{X}\hspace{0.21ex}} 
\renewcommand\d{\textsl{d}} 
\newcommand\e{\textsl{e}} 
\newcommand{\BR}{{\textsl{BR}\hspace{0.17ex}}} 
\newcommand\loc{\mathrm{loc}} 
\newcommand\norm[2]{\Vert #1\Vert_{#2}} 
\DeclareMathOperator{\V}{V} 
\DeclareMathOperator{\ret}{R} 
\newcommand\F{\mathcal{F}}
\newcommand{\AC}{{\textsl{AC}\hspace{0.17ex}}} 
\newcommand\setmeno{\!\smallsetminus\!} 
\newcommand\function{\longrightarrow} 
\newcommand\indicator{\mathds{1}} 
 \newcommand\en{\mathbb{N}} 
\newcommand\ar{\mathbb{R}} 
\providecommand{\clint}[1]{\hspace{0.045ex}\left[#1\right]} 
\providecommand{\clsxint}[1]{\hspace{0.1ex}\left[#1\right[\hspace{0.15ex}} 
\providecommand{\opint}[1]{\hspace{0.15ex}\left]#1\right[\hspace{0.15ex}} 
\newcommand{\borel}{\mathscr{B}} 
\renewcommand{\L}{{\textsl{L}\hspace{0.17ex}}} 
\newcommand\leb{\mathpzc{L}} 
\DeclareMathOperator{\de}{d \! \hspace{0.2ex}} 
\newcommand{\Step}{{\textsl{St}\hspace{0.17ex}}} 
\newcommand\As{\textsl{A}\hspace{0.21ex}} 
\DeclareMathOperator{\Int}{int} 
\DeclareMathOperator{\Cl}{cl} 
\newcommand\Y{\textsl{Y}\hspace{0.21ex}} 
\renewcommand\S{\textsl{S}\hspace{0.21ex}} 
\DeclareMathOperator{\cont}{Cont}  
\DeclareMathOperator{\discont}{Discont}  
\newcommand{\Czero}{{\textsl{C}\hspace{0.18ex}}} 
\DeclareMathOperator{\Lipcost}{Lip} 
\newcommand{\Lip}{{\textsl{Lip}\hspace{0.15ex}}} 
\newcommand\E{\textsl{E}\hspace{0.21ex}} 
\renewcommand\H{\mathcal{H}} 
\newcommand\duality[2]{\langle #1,#2 \rangle} 
\newcommand{\Conv}{\mathscr{C}} 
\newcommand\K{\mathcal{K}} 
\DeclareMathOperator{\Proj}{Proj} 
\newcommand\hausd{\mathpzc{H}} 
\newcommand\A{\mathcal{A}} 
\newcommand\B{\mathcal{B}} 
\newcommand\C{\mathcal{C}} 
\newcommand\G{\mathcal{G}} 
\newcommand\Z{\mathcal{Z}} 
\newcommand{\BV}{{\textsl{BV}\hspace{0.17ex}}} 
\newcommand{\W}{{\textsl{W}\hspace{0.17ex}}} 
\DeclareMathOperator{\pV}{V} 
\renewcommand\r{\textsl{r}} 
\newcommand\vartot[1]{\!\left\bracevert\! #1 \!\right\bracevert\!} 
\DeclareMathOperator{\D}{D\!} 
\newcommand{\Ctilde}{\widetilde{\mathcal{C}}} 
\newcommand{\yhat}{\hat{y}} 
\newcommand{\Sw}{{\mathsf{S}}} 
\providecommand{\cldxint}[1]{\hspace{0.15ex}\left]#1\right]} 
\renewcommand\l{\textsl{l}} 
\definecolor{blu}{rgb}{0.1,0.1,1}
\def\blu#1{\textcolor{blu}{#1}}
\definecolor{green}{rgb}{0.0, 0.5, 0.0}
\definecolor{marr}{rgb}{0.63, 0.47, 0.35}
\begin{document}


\title[Sweeping processes]{Convex valued geodesics and applications to sweeping processes with bounded retraction}

\author{Vincenzo Recupero}
\thanks{The author is a member of GNAMPA-INdAM}

\dedicatory{ \vspace{3ex}
\large 
Dedicated to Professor Alexander Ioffe}

\address{\textbf{Vincenzo Recupero} \\
        Dipartimento di Scienze Matematiche \\ 
        Politecnico di Torino \\
        Corso Duca degli Abruzzi 24 \\ 
        I-10129 Torino \\ 
        Italy. \newline
        {\rm E-mail address:}
        {\tt vincenzo.recupero@polito.it}}

\subjclass[2010]{34G25, 34A60, 47J20, 74C05}
\keywords{Evolution variational inequalities, Functions of bounded variation, Sweeping processes, Convex sets, Retraction, Geodesics with respect to the excess}



\begin{abstract}
In this paper we provide a formulation for sweeping processes with arbitrary locally bounded retraction, not necessarily 
left or right continuous. Moreover we provide a proof of the existence and uniqueness of solutions for this formulation which relies on 
the reduction to the $1$-Lipschitz continuous case by using a suitable family of geodesics for the asymmetric Hausdorff-like distance called \emph{excess}.
\end{abstract}


\maketitle


\thispagestyle{empty}


\section{Introduction}

Evolution problems with unilateral constraints play a crucial role in many important mechanical applications. A well
known problem of this kind is the so called \emph{sweeping process} introduced by J.J. Moreau in the first seventies
in the articles \cite{Mor71, Mor72, Mor73, Mor74, Mor77}. The most simple formulation of the sweeping processes is the following. Let $\H$ be a real Hilbert space and let $\C(t)$ be a given nonempty, closed and convex subset of $\H$ depending on time such that the mapping $t \longmapsto \C(t)$ is locally Lipschitz continuous on $\clsxint{0,\infty}$ when the family of closed subsets of $\H$ is endowed with the Hausdorff metric. One has to find a locally Lipschitz continuous function $y : \clsxint{0,\infty} \function \H$ such that
\begin{alignat}{3}
  & y(t) \in \C(t) & \quad & \forall t \in \clsxint{0,\infty}, \label{y in C - Lip - intro} \\
  & -y'(t) \in N_{\C(t)}(y(t)) & \quad & \text{for $\leb^{1}$-a.e. $t \in \clsxint{0,\infty}$}, \label{diff. incl. - Lip - intro} \\
  & y(0) = y_{0}, &  \label{in. cond. - Lip - intro}  
\end{alignat}
$y_0$ being a prescribed point in $\C(0)$. Here $\leb^1$ is the Lebesgue measure, and $N_{\C(t)}$ is the exterior normal cone to $\C(t)$ at $y(t)$ (all the definitions will be recalled in Section \ref{S:Preliminaries}).
Moreau was originally motivated by plasticity and friction dynamics (cf. \cite{Mor74, Mor76b, Mor02}), but now sweeping processes have found applications to nonsmooth mechanics (see, e.g., \cite{Mon93, KunMon00, Pao16}), to economics (cf., e.g., \cite{Hen73, Cor83, FlaHirJou09}), to electrical circuits (see, e.g., 
\cite{AddAdlBroGoe07, BroThi10, AcaBonBro11, AdlHadThi14}), to crowd motion modeling (cf., e.g., 
\cite{MauVen08, MauRouSan10, MauVen11, MauRouSanVen11, MauRouSan14, DimMauSan16}), and to other fields (see, e.g., the references in the recent paper \cite{Thi16}).
The theoretical analysis of problem \eqref{y in C - BV - intro}-\eqref{in. cond. - BV - intro} has been expanded in various directions: the case of $\C$ continuous was first dealt in \cite{Mon93},
and in \cite{Mon84} the application of external forces is also considered;
the nonconvex case has been studied in several papers, e.g. 
\cite{Val88, Val90, Val92, CasDucVal93, CasMon96, ColGon99, Ben00, ColMon03, Thi03, BouThi05, EdmThi06, Thi08, HasJouThi08, BerVen10, SenThi14};  
for stochastic versions see, e.g., \cite{Cas73, Cas76, BerVen11, CasMonRay16}, while periodic solutions can be found in \cite{CasMon95}. The continuous dependence properties of various sweeping problems are investigated, e.g., in 
\cite{Mor77, BroKreSch04, KreVla03, Rec09, KreRoc11, Rec11a, Rec11c, Rec15a, KleRec16, KopRec16},
and the control problems are studied, e.g., in \cite{ColHenHoaMor15, ColHenNguMor16, ColPal16}.

In \cite{Mor77} the formulation \eqref{y in C - Lip - intro}-\eqref{in. cond. - Lip - intro} is extended to the case when the mapping $t \longmapsto \C(t)$ is of locally bounded variation and right continuous in the following natural way: it is proved that there is a unique $y \in \BV^\r_\loc(\clsxint{0,\infty};\H)$, the space of right continuous $\H$-valued functions of locally bounded variation, such that there exist a positive measure $\mu$ and a $\mu$-integrable function $v : \clsxint{0,\infty} \function \H$ satisfying
\begin{alignat}{3}
  & y(t) \in \C(t) & \quad &  \forall t \in \clsxint{0,\infty}, \label{y in C - BV - intro} \\
  & \! \D y = v \mu, & \label{Dy = w mu - intro} \\
  & -v(t) \in N_{\C(t)}(y(t)) & \quad & \text{for $\mu$-a.e. $t \in \clsxint{0,\infty}$}, \label{diff. incl. - BV - intro} \\
  & y(a) = y_{0}, &  \label{in. cond. - BV - intro}
\end{alignat}
where $\D y$ denotes distributional derivative of $y$, which is a measure since $y$ is of bounded variation.

When $t$ is a jump point for $\C$, the normality conditions
\eqref{Dy = w mu - intro}-\eqref{diff. incl. - BV - intro} means that  $y(t) = y(t+) = \Proj_{\C(t)}(y(t-))$, where 
$\Proj$ is the classical projection operator, thus $y$ jumps from $\C(t-)$ to $\C(t) = \C(t+)$ along the shortest path which allows to satisfy the constraint \eqref{y in C - BV - intro}.

The existence and uniqueness proof for problem \eqref{y in C - BV - intro}--\eqref{in. cond. - BV - intro} relies upon
an implicit approximation scheme that is usually called "catching up algorithm" in view of the geometric meaning of the 
projection on $\C(t+)$. Actually in \cite{Mor77} the moving set $\C$ is assumed to be only with locally bounded right continuous retraction, rather than with locally bounded variation (the notion of retraction is recalled in \eqref{retraction}), i.e. the Hausdorff distance is replaced by the asymmetric distance $\e(\A,\B) := \sup_{x \in \A} \d(x,\B)$, called \emph{excess of $\A$ over $\B$}, with $\A$, $\B$ nonempty closed convex sets in $\H$. When $\C$ 
does not enjoy the right continuity property with respect to the excess $\e$, a formulation involving measures like \eqref{y in C - BV - intro}--\eqref{in. cond. - BV - intro} is not possible since at a jump point $t$ one has $y(t) = \Proj_{\C(t)}(y(t-))$ and
$y(t+) = \Proj_{\C(t+)}(y(t))$, thus in \cite{Mor77} a weak solution is defined as the uniform limit
of the approximate solutions defined by the catching up algorithm.

The purpose of the present paper is twofold. First we want to introduce a sort of ``differential measure'' formulation
similar to \eqref{y in C - BV - intro}--\eqref{in. cond. - BV - intro} also in the case when $\C$ is with locally bounded 
retraction but not necessarily left or right continuous. On the other hand we provide an existence proof for our 
formulation which reduces the problem to the case when $\C$ has locally Lipschitz retraction. 
In order to perform this proof we reparametrize the convex valued curve $\C$ by the ``arc length'' $\ell_\C(t)$ with respect to the excess $\e$ (for simplicity let us assume now that there are no intervals where $\ell_\C$ is constant) so that we can write $\C(t) = \Ctilde(\ell_\C(t))$ for every $t$ and for a suitable $\e$-Lipschitz reparametrization $\Ctilde(\sigma)$.
The problem of reducing to the Lipschitz case is that $\Ctilde$ is only defined in the image of $\ell_\C$, therefore we have to fill in the jumps of $\C$ in an appropriate way in order to have $\Ctilde$ defined on the whole $\clsxint{0,\infty}$.
In this way we can take the Lispchitz solution of the sweeping process associated to $\Ctilde$, and then throw away
the jumps to get the discontinuous solution related to $\C$.
In the past papers \cite{Rec11a,KreRec14a,KreRec14b,Rec16a} we have shown that the choice of the paths connecting these jumps is nontrivial and an arbitrary connection can produce the wrong solution. In the paper 
\cite{Rec16a} we found the proper family of geodesic connecting the jumps in the case when $\C$ is of bounded variation with respect to the Hausdorff metric. In the present paper we exploit a family of convex valued curves which are geodesics with respect to the excess $\e$. In a certain sense this procedure allows us to say that every discontinuous sweeping process with locally bounded retraction ``is indeed'' a Lipschitz continuous one. In this way we also generalize a result of \cite{RecSan18} where a kind of ``differential measure'' formulation is given for sweeping processes with bounded variation with respect to the Hausdorff metric. In that paper we also allowed the 
behaviour on jumps to have a more general behaviour, therefore it is natural to wonder if any of these general behaviors can be obtained by a reparametrization of $\C$ using suitable curves connecting the jumps.

The paper is organized as follows. In the next section we present some preliminaries and in Sections 3 we state the main results of the paper. In Sections 4 and 5 we review the proof of the Lipschitz continuous case and we provide 
an integral formulation of the Lipschitz continuous sweeping processes. In Section 6 we define and study the class
of convex valued geodesics needed in the proof of our main result which is presented in the last Section 7.


\section{Preliminaries}\label{S:Preliminaries}

Let us now recall the main notations and definitions used throughout in the paper. The set of integers greater than or equal to $1$ will be denoted by $\en$. Given an interval $I$ of the real line $\ar$ and the family $\borel(I)$ of Borel subsets of $I$, if $\mu : \borel(I) \function \clint{0,\infty}$ is a measure, $p \in \clint{1,\infty}$, and if 
$\E$ is a Banach space, then the space of $\E$-valued functions which are $p$-integrable with respect to $\mu$ will be denoted by $\L^p(I, \mu; \E)$ or by $\L^p(\mu; \E)$. We do not identify two functions which are equal $\mu$-almost everywhere ($\mu$-a.e.). The one dimensional Lebesgue measure is denoted by $\leb^1$ and for the theory of integration of vector valued functions we refer, e.g., to \cite[Chapter VI]{Lan93}.

\subsection{Functions of bounded variation}

In this subsection we assume that 
\begin{equation}\label{X complete metric space}
  \text{$(\X,\d)$ is an extended complete metric space},
\end{equation}
i.e. $\X$ is a set and $\d : \X \times \X \function \clint{0,\infty}$ satisfies 
the usual axioms of a distance, but may take on the value $\infty$. The notion of completeness remains unchanged. 
The general topological notions of interior, closure and boundary of a subset $\Y \subseteq \X$ will be respectively denoted by $\Int(\Y)$, $\Cl(\Y)$ and $\partial \Y$. We also set $\d(x,\As) := \inf_{a \in \As} \d(x,a)$. If $(\Y,\d_\Y)$ is a metric space then the continuity set of a function $f : \Y \function \X$ is denoted by $\cont(f)$, while 
$\discont(f) := \Y \setmeno \cont(f)$. For $\S \subseteq \Y$ we write 
$\Lipcost(f,\S) := \sup\{\d(f(s),f(t))/\d_\Y(t,s)\ :\ s, t \in \S,\ s \neq t\}$, $\Lipcost(f) := \Lipcost(f,\Y)$, the Lipschitz constant of $f$, and $\Lip(\Y;\X) := \{f : \Y \function \X\ :\ \Lipcost(f) < \infty\}$, the set of $\X$-valued Lipschitz continuous functions on $\Y$. As usual $\Lip_\loc(\Y;\X) := \{f \in \X^I\ :\ \Lipcost(f,\S) < \infty \ \text{$\forall \S$  compact in $\Y$}\}$.

\begin{Def}
Given an interval $I \subseteq \ar$, a function $f : I \function \X$, and a subinterval $J \subseteq I$, the 
\emph{(pointwise) variation of $f$ on $J$} is defined by
\begin{equation}\notag
  \pV(f,J) := 
  \sup\left\{
           \sum_{j=1}^{m} \d(f(t_{j-1}),f(t_{j}))\ :\ m \in \en,\ t_{j} \in J\ \forall j,\ t_{0} < \cdots < t_{m} 
         \right\}.
\end{equation}
If $\pV(f,I) < \infty$ we say that \emph{$f$ is of bounded variation on $I$} and we set 
$\BV(I;\X) := \{f : I \function \X\ :\ \pV(f,I) < \infty\}$ and $
\BV_\loc(I;\X) := \{f : I \function \X\ :\ \pV(f,J) < \infty\ \text{$\forall J$  \emph{compact} in $I$}\}$.
\end{Def}

It is well known that the completeness of $\X$ implies that every $f \in \BV(I;\X)$ admits one-sided limits $f(t-), f(t+)$ at every point $t \in I$, with the convention that $f(\inf I-) := f(\inf I)$ if $\inf I \in I$, and $f(\sup I+) := f(\sup I)$ if $\sup I \in I$. Moreover $\discont(f)$ is at most countable. We set $\BV^\l(I;\X) := \{f \in \BV(I;\X)\ :\ f(t-) = f(t) \quad \forall t \in I\}$, 
$\BV^\r(I;\X) := \{f \in \BV(I;\X)\ :\ f(t) = f(t+) \quad \forall t \in I\}$. Accordingly we set 
$\BV_\loc^\l(I;\X) := \{u \in \BV_\loc(I;\X)\ :\ u(t-) = u(t) \quad \forall t \in I\}$ and
$\BV_\loc^\r := \{u \in \BV_\loc(I;\X)\ :\ u(t) = u(t+) \quad \forall t \in I\}$ so that
$\Lip_\loc(I;\X) \subseteq \BV_\loc(I;\X)$.


\subsection{Convex sets in Hilbert spaces}

Throughout the remainder of the paper we assume that
\begin{equation}\label{H-prel}
\begin{cases}
  \text{$\H$ is a real Hilbert space with inner product $(x,y) \longmapsto \duality{x}{y}$}, \\
  \norm{x}{} := \duality{x}{x}^{1/2},
\end{cases}
\end{equation}
and we endow $\H$ with the natural metric defined by $\d(x,y) := \norm{x-y}{}$, $x, y \in \H$. 
We set
\[
  D_r := \{x \in \H\ :\ \norm{x}{} \le r\}, \qquad r > 0
\]
and
\begin{equation}\notag
  \Conv_\H := \{\K \subseteq \H\ :\ \K \ \text{nonempty, closed and convex} \}.
\end{equation}
If  $\K \in \Conv_\H$ and $x \in \H$, then $\Proj_{\K}(x)$ is the projection on $\K$, i.e. $y = \Proj_\K(x)$ is the unique 
point such that $d(x,\K) = \norm{x-y}{}$, and it is the only element $y \in \H$ such that the two conditions
\begin{equation}\notag
  y \in \K, \quad \duality{x - y}{v - y} \le 0 \qquad \forall v \in \K,
\end{equation}
hold. If $\K \in \Conv_\H$ and $x \in \K$, then $N_\K(x)$ denotes the \emph{(exterior) normal cone of $\K$ at $x$}:
\begin{equation}\label{normal cone}
  N_\K(x) := \{u \in \H\ :\ \duality{v - x}{u} \le 0\ \forall v \in \K\} = \Proj_\K^{-1}(x) - x.
\end{equation}
From the previous definition in \eqref{normal cone} it follows that the multivalued mapping $x \longmapsto N_\K(x)$ is \emph{monotone}, i.e. 
$\duality{u_1 - u_2}{x_1 - x_2} \ge 0$ whenever $x_j \in \K$, $u_j \in N_\K(x_j)$, $j = 1, 2$ (see, e.g., 
\cite[Exemple 2.8.2, p.46]{Bre73}). We endow the set $\Conv_{\H}$ with the so called \emph{excess}. Here we recall the definition.

\begin{Def}
The \emph{excess} $\e: \Conv_{\H} \times \Conv_{\H} \function \clint{0,\infty}$ is defined by
\begin{equation}\notag
  \e(\A,\B) := \sup_{a \in \A} \d(a,\B), \qquad \A, \B \in \Conv_{\H},
\end{equation}
which is also called the \emph{excess of $\A$ over $\B$.}
\end{Def}

The following facts are well known (see \cite{Mor74b}):
\begin{equation}
  \e(\A,\A) =0 \qquad \forall \A \in \Conv_\H,
\end{equation}
\begin{equation}
  \e(\A,\B) \le \e(\A,\C) + \e(\C,\B) \qquad \forall \A, \B, \C \in \Conv_\H.
\end{equation}
Moreover we have that
\begin{equation}
  \e(\A,\B) = \inf\{\rho > 0 \ :\ \A \subseteq \B + D_\rho\} \qquad \forall \A, \B \in \Conv_\H.
\end{equation}
\begin{Def}
Given an interval $I \subseteq \ar$, a function $\C : I \function \Conv_\H$, and a subinterval $J \subseteq I$, the 
\emph{retraction of $\C$ on $J$} is defined by
\begin{equation}\label{retraction}
  \ret(\C,J) := 
  \sup\left\{
           \sum_{j=1}^{m} \e(\C(t_{j-1}),\C(t_{j}))\ :\ m \in \en,\ t_{j} \in J\ \forall j,\ t_{0} < \cdots < t_{m} 
         \right\}.
\end{equation}
If $\ret(\C,I) < \infty$ we say that \emph{$\C$ is of bounded retraction on $I$} and we set 
$\BR(I;\Conv_\H) := \{\C : I \function \Conv_\H\ :\ \ret(\C,I) < \infty\}$. We also set 
$\BR_\loc(I;\Conv_\H) := \{\C : I \function \Conv_\H\ :\ \ret(\C,J) < \infty\ \text{$\forall J$ compact in $I$}\}$.
\end{Def}
For every $\C \in \BR_\loc(I;\Conv_\H)$ we will define $\ell_\C : \clsxint{0,\infty} \function \clsxint{0,\infty}$ by
\begin{equation}
  \ell_\C(t) := \ret(\C;\clint{0,t}), \qquad t \ge 0.
\end{equation}
The function $\ell_\C$ is a sort of arc length with respect to the excess $\e$ and is an increasing function such that
$\ell_\C(0) = 0$. 
It is well known (cf. \cite{Mor74b}) that if $\C \in \BR_\loc(I;\Conv_\H)$ then for every $t \in I$ we have
\begin{equation}
  \C(t+) := \liminf_{s \to t+} \C(s)  := \{x \in \H\ :\ \lim_{s \to t+} \d(x,\C(s)) = 0\} \in \Conv_\H,
\end{equation}
\begin{equation}
  \C(t-) := \liminf_{s \to t-} \C(s) := \{x \in \H\ :\ \lim_{s \to t-} \d(x,\C(s)) = 0\} \in \Conv_\H
\end{equation}
and
\begin{equation}
  \e(\C(t),\C(t+)) = \lim_{s \to t+}\e(\C(t),\C(s)) = \ell_\C(t+) - \ell_\C(t),
\end{equation}
\begin{equation}
  \e(\C(t-),\C(t)) = \lim_{s \to t-}\e(\C(s),\C(t)) = \ell_\C(t) - \ell_\C(t-),
\end{equation}
therefore
\begin{equation}
  \lim_{s \to t+}\e(\C(t),\C(s)) = 0 \ \Longleftrightarrow\ \C(t) \subseteq \C(t+)
\end{equation}
and
\begin{equation}
  \lim_{s \to t-}\e(\C(s),\C(t)) = 0 \ \Longleftrightarrow\ \C(t-) \subseteq \C(t).
\end{equation}
Thus for $\C \in \BR_\loc(I;\Conv_\H)$ we set $\cont(\C) := \{t \in I\ :\ \e(\C(t),\C(t+)) = \e(\C(t-),\C(t)) = 0\}$
and $\discont(\C) := I \setmeno \cont(\C)$. Moreover we set 
$\BR^\r(I;\Conv_\H) := \{\C \in \BR(I;\Conv_\H)\ :\ \e(\C(t),\C(t+)) = 0\ \forall t \in I\}$ and 
$\BR^\l(I;\Conv_\H) := \{\C \in \BR(I;\Conv_\H)\ :\ \e(\C(t-),\C(t)) = 0\ \forall t \in I\}$. Finally 
$\BR^\r_\loc(I;\Conv_\H) := \{\C \in \BR^\loc(I;\Conv_\H)\ :\ \e(\C(t),\C(t+)) = 0\ \forall t \in I\}$ and
$\BR^\l_\loc(I;\Conv_\H) := \{\C \in \BR^\loc(I;\Conv_\H)\ :\ \e(\C(t-),\C(t)) = 0\ \forall t \in I\}$.


\subsection{Differential measures}

We recall that a \emph{$\H$-valued measure on $I$} is a map $\mu : \borel(I) \function \H$ such that 
$\mu(\bigcup_{n=1}^{\infty} B_{n})$ $=$ $\sum_{n = 1}^{\infty} \mu(B_{n})$ whenever $(B_{n})$ is a sequence of mutually disjoint sets in $\borel(I)$. The \emph{total variation of $\mu$} is the positive measure 
$\vartot{\mu} : \borel(I) \function \clint{0,\infty}$ defined by
\begin{align}\label{tot var measure}
  \vartot{\mu}(B)
  := \sup\left\{\sum_{n = 1}^{\infty} \norm{\mu(B_{n})}{}\ :\ 
                 B = \bigcup_{n=1}^{\infty} B_{n},\ B_{n} \in \borel(I),\ 
                 B_{h} \cap B_{k} = \varnothing \text{ if } h \neq k\right\}. \notag
\end{align}
The vector measure $\mu$ is said to be \emph{with bounded variation} if $\vartot{\mu}(I) < \infty$. In this case the equality $\norm{\mu}{} := \vartot{\mu}(I)$ defines a norm on the space of measures with bounded variation.
Finally we say that $\mu$ is \emph{with local bounded variation} if $\vartot{\mu}(J) < \infty$ for every interval $J$ compact in $I$ (see, e.g. \cite[Chapter I, Section  3]{Din67}). 

If $\nu : \borel(I) \function \clint{0,\infty}$ is a positive bounded Borel measure and if $g \in \L^1(I,\nu;\H)$, then $g\nu$ will denote the vector measure defined by $g\nu(B) := \int_B g\de \nu$ for every $B \in \borel(I)$. In this case 
$\vartot{g\nu}(B) = \int_B \norm{g(t)}{}\de \nu$ for every $B \in \borel(I)$ (see \cite[Proposition 10, p. 174]{Din67}). Moreover a vector measure $\mu$ is called \emph{$\nu$-absolutely continuous} if $\mu(B) = 0$ whenever 
$B \in \borel(I)$ and $\nu(B) = 0$. 

Assume that $\mu : \borel(I) \function \H$ is a vector measure with bounded variation and let $f : I \function \H$ and 
$\phi : I \function \ar$ be two \emph{step maps with respect to $\mu$}, i.e. there exist $f_{1}, \ldots, f_{m} \in \H$, 
$\phi_{1}, \ldots, \phi_{m} \in \H$ and $A_{1}, \ldots, A_{m} \in \borel(I)$ mutually disjoint such that 
$\vartot{\mu}(A_{j}) < \infty$ for every $j$ and $f = \sum_{j=1}^{m} \indicator_{A_{j}} f_{j},$, 
$\phi = \sum_{j=1}^{m} \indicator_{A_{j}} \phi_{j},$ where $\indicator_{S} $ is the characteristic function of a set $S$, i.e. 
$\indicator_{S}(x) := 1$ if $x \in S$ and $\indicator_{S}(x) := 0$ if $x \not\in S$. For such step functions we define 
$\int_{I} \duality{f}{\mu} := \sum_{j=1}^{m} \duality{f_{j}}{\mu(A_{j})} \in \ar$ and
$\int_{I} \phi \de \mu := \sum_{j=1}^{m} \phi_{j} \mu(A_{j}) \in \H$. If $\Step(\vartot{\mu};\H)$ (resp. $\Step(\vartot{\mu})$) is the set of $\H$-valued (resp. real valued) step maps with respect to $\mu$, then the maps
$\Step(\vartot{\mu};\H)$ $\function$ $\H : f \longmapsto \int_{I} \duality{f}{\mu}$ and
$\Step(\vartot{\mu})$ $\function$ $\H : \phi \longmapsto \int_{I} \phi \de \mu$ 
are linear and continuous when $\Step(\vartot{\mu};\H)$ and $\Step(\vartot{\mu})$ are endowed with the 
$\L^{1}$-seminorms $\norm{f}{\L^{1}(\vartot{\mu};\H)} := \int_I \norm{f}{} \de \vartot{\mu}$ and
$\norm{\phi}{\L^{1}(\vartot{\mu})} := \int_I |\phi| \de \vartot{\mu}$. Therefore they admit unique continuous extensions 
$\mathsf{I}_{\mu} : \L^{1}(\vartot{\mu};\H) \function \ar$ and 
$\mathsf{J}_{\mu} : \L^{1}(\vartot{\mu}) \function \H$,
and we set 
\[
  \int_{I} \duality{f}{\de \mu} := \mathsf{I}_{\mu}(f), \quad
  \int_{I} \phi \mu := \mathsf{J}_{\mu}(\phi),
  \qquad f \in \L^{1}(\vartot{\mu};\H),\quad \phi \in \L^{1}(\vartot{\mu}).
\]

If $\nu$ is bounded positive measure and $g \in \L^{1}(\nu;\H)$, arguing first on step functions, and then taking limits, it is easy to check that 
\begin{equation}\label{f de gnu =fg de nu}
  \int_I\duality{f}{\de(g\nu)} = \int_I \duality{f}{g}\de \nu \qquad \forall f \in \L^{\infty}(\mu;\H).
\end{equation} 
The following results (cf., e.g., \cite[Section III.17.2-3, pp. 358-362]{Din67}) provide a connection between functions with bounded variation and vector measures which will be implicitly used in the paper.

\begin{Thm}\label{existence of Stietjes measure}
For every $f \in \BV_\loc(I;\H)$ there exists a unique vector measure of local bounded variation $\mu_{f} : \borel(I) \function \H$ such that 
\begin{align}
  \mu_{f}(\opint{c,d}) = f(d-) - f(c+), \qquad \mu_{f}(\clint{c,d}) = f(d+) - f(c-), \notag \\ 
  \mu_{f}(\clsxint{c,d}) = f(d-) - f(c-), \qquad \mu_{f}(\cldxint{c,d}) = f(d+) - f(c+). \notag 
\end{align}
whenever $c < d$ and the left hand side of each equality makes sense. Conversely, if $\mu : \borel(I) \function \H$ is a vector measure with local bounded variation, and if $f_{\mu} : I \function \H$ is defined by 
$f_{\mu}(t) := \mu(\clsxint{\inf I,t} \cap I)$, then $f_{\mu} \in \BV_\loc(I;\H)$ and $\mu_{f_{\mu}} = \mu$.
\end{Thm}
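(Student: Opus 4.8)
The plan is to construct $\mu_f$ first on the algebra of intervals, to control it by a genuine positive measure furnished by the scalar theory, and only then to extend it to $\borel(I)$; uniqueness and the converse will follow formally. Since $\BV_\loc$ and local bounded variation of measures are both defined through compact subintervals, and $\borel(I)$ is generated by the Borel subsets of such subintervals, it suffices to work on a fixed compact $\clint{a,b}\subseteq I$ and to patch the resulting measures, consistency being guaranteed by the uniqueness part; so assume $I=\clint{a,b}$ and $f\in\BV(\clint{a,b};\H)$. Let $v(t):=\pV(f,\clint{a,t})$, an increasing bounded function, and let $\nu$ be the finite positive Stieltjes measure it generates, so that $\nu(\clsxint{c,d})=v(d-)-v(c-)$ with the analogous formulas on $\opint{c,d}$, $\cldxint{c,d}$, $\clint{c,d}$. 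Define $\mu_f$ on a subinterval by the four prescribed formulas (e.g.\ $\mu_f(\clsxint{c,d}):=f(d-)-f(c-)$) and extend it additively to the algebra $\A$ of finite disjoint unions of subintervals; the formulas are mutually consistent --- for instance $\mu_f(\{c\})=f(c+)-f(c-)$ splits $\clint{c,d}$ correctly --- so $\mu_f$ is a well defined finitely additive $\H$-valued function on $\A$. The key estimate is $\norm{\mu_f(B)}{}\le\nu(B)$ for all $B\in\A$, which on a single interval reduces to $\norm{f(d-)-f(c-)}{}\le v(d-)-v(c-)$ and follows from the additivity of the variation together with $\norm{f(s')-f(s)}{}\le\pV(f,\clint{s,s'})$ by letting $s\uparrow c$ and $s'\uparrow d$.

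Because $\nu$ is finite and $\A$ generates $\borel(\clint{a,b})$, the finite combinations of indicators $\indicator_B$ with $B\in\A$ are dense in $\L^{1}(\nu;\ar)$, and the assignment $\indicator_B\longmapsto\mu_f(B)$ is $\nu$-contractive by the key estimate; hence it extends to a bounded linear operator $T:\L^{1}(\nu;\ar)\function\H$ with $\norm{T\phi}{}\le\int_{\clint{a,b}}|\phi|\de\nu$. Setting $\mu_f(B):=T(\indicator_B)$ for arbitrary $B\in\borel(\clint{a,b})$ extends the previous definition, and $\mu_f$ is countably additive: if $B=\bigcup_n B_n$ with the $B_n$ mutually disjoint, then $\indicator_B=\sum_n\indicator_{B_n}$ in $\L^{1}(\nu;\ar)$ by dominated convergence, so $\mu_f(B)=\sum_n\mu_f(B_n)$ by continuity of $T$. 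The estimate $\norm{\mu_f(B)}{}\le\nu(B)$ persists for all Borel $B$, whence $\vartot{\mu_f}\le\nu$ and $\vartot{\mu_f}(\clint{a,b})\le v(b)<\infty$; thus $\mu_f$ is a vector measure of (local) bounded variation, and it satisfies the four required identities by construction.

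For uniqueness, if $\mu$ and $\mu'$ both satisfy the identities then for every $h\in\H$ the real measure $B\longmapsto\duality{\mu(B)-\mu'(B)}{h}$ vanishes on all subintervals, hence on $\borel(I)$ since these form a generating $\pi$-system on which a bounded signed measure is determined; letting $h$ vary over $\H$ gives $\mu=\mu'$. For the converse, given a measure $\mu$ of local bounded variation put $f_\mu(t):=\mu(\clsxint{\inf I,t}\cap I)$. Continuity from below of $\mu$ shows that $f_\mu$ is left continuous with $f_\mu(d-)-f_\mu(c-)=\mu(\clsxint{c,d})$, while for $t_0<\cdots<t_m$ one has $\sum_{j}\norm{f_\mu(t_j)-f_\mu(t_{j-1})}{}=\sum_{j}\norm{\mu(\clsxint{t_{j-1},t_j})}{}\le\vartot{\mu}(\clint{t_0,t_m})$, so $f_\mu\in\BV_\loc(I;\H)$. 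Consequently $\mu_{f_\mu}$ and $\mu$ agree on all half open subintervals, and therefore everywhere by the uniqueness just established.

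The only genuinely delicate point is the passage from the finitely additive, interval-defined $\mu_f$ to an honest countably additive Borel vector measure of bounded variation. This is precisely what the domination $\norm{\mu_f(B)}{}\le\nu(B)$ by the bona fide positive measure $\nu$ provides: it both forces $\sigma$-additivity on $\A$ and supplies the $\L^{1}(\nu;\ar)$-continuity that drives the extension, so that no recourse to the Orlicz--Pettis theorem or to the Radon--Nikodym property of $\H$ is required.
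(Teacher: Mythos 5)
The paper does not actually prove this theorem: it is quoted from Dinculeanu \cite[Section III.17.2--3]{Din67}, so there is no internal argument to compare yours against. Your construction is essentially the classical one from that source: define $\mu_f$ on the interval algebra by the four formulas, dominate it by the positive Lebesgue--Stieltjes measure $\nu$ of the variation function $v(t)=\pV(f,\clint{a,t})$ through the estimate $\norm{\mu_f(B)}{}\le\nu(B)$, and let this domination deliver both $\sigma$-additivity and the $\L^{1}(\nu)$-continuity that drives the extension to $\borel(\clint{a,b})$; uniqueness and the converse then follow from the $\pi$--$\lambda$ argument. The individual steps are sound: the key inequality $\norm{f(d-)-f(c-)}{}\le v(d-)-v(c-)$ does follow in the limit from $\norm{f(s')-f(s)}{}\le v(s')-v(s)$, the consistency of the four formulas under splitting at interior points is the routine verification you flag, the contractivity of $T$ on $\A$-simple functions is legitimate after passing to a disjoint refinement inside the algebra, and the converse computation $f_\mu(d-)-f_\mu(c-)=\mu(\clsxint{c,d})$ together with agreement on the generating $\pi$-system of half-open intervals closes the loop.

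The one point that is genuinely glossed over is the patching step for noncompact $I$. If $I$ is unbounded and $f\in\BV_\loc(I;\H)\setminus\BV(I;\H)$ (say $f(t)=t$ on $\clsxint{0,\infty}$), then no countably additive $\H$-valued set function on all of $\borel(I)$ can satisfy the interval identities: one would need $\mu_f(I)=\lim_{n}\mu_f(\clint{0,n})$ to exist in $\H$, while $\norm{\mu_f(\clint{0,n})}{}\to\infty$. So ``consistency guaranteed by the uniqueness part'' produces a $\sigma$-additive set function only on the $\delta$-ring of relatively compact Borel subsets of $I$, not on $\borel(I)$ itself. This is the correct reading of ``vector measure of local bounded variation'' (it is how Dinculeanu and Moreau set things up, and it is the only reading the paper ever uses, since $\mu_f=\D f$ is only integrated against functions supported in compact subintervals), but it is a defect of the literal statement that your proof inherits rather than resolves; you should say explicitly on which class of sets the patched $\mu_f$ is defined and where countable additivity is asserted.
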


\begin{Prop}
Let $f  \in \BV_\loc(I;\H)$, let $g : I \function \H$ be defined by $g(t) := f(t-)$, for $t \in \Int(I)$, and by $g(t) := f(t)$, if 
$t \in \partial I$, and let $V_{g} : I \function \ar$ be defined by $V_{g}(t) := \pV(g, \clint{\inf I,t} \cap I)$. Then  
$\mu_{g} = \mu_{f}$ and $\vartot{\mu_{f}} = \mu_{V_{g}} = \pV(g,I)$.
\end{Prop}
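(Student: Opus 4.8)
The plan is to prove the two measure identities in turn, each time using the uniqueness in Theorem~\ref{existence of Stietjes measure} to reduce an equality of measures to the behaviour of one-sided limits, i.e.\ to an equality of distribution functions.

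First I would check that $g\in\BV_\loc(I;\H)$ (which follows from $\pV(g,J)\le\pV(f,J)$, obtained by approximating each left limit $f(t_j-)$ by a value $f(\tau_j)$ with $\tau_{j-1}<\tau_j<t_j$) and that $f$ and $g$ share the same one-sided limits everywhere. For interior $t$ a routine $\eps$-argument gives $g(t-)=\lim_{s\to t-}f(s-)=f(t-)$ and $g(t+)=\lim_{s\to t+}f(s-)=f(t+)$, while at the endpoints $g=f$ together with the stated conventions gives the same limits. Since by Theorem~\ref{existence of Stietjes measure} the measure of any $\BV_\loc$ function is determined by these one-sided limits on the four types of interval, and they coincide for $f$ and $g$, the uniqueness assertion yields $\mu_g=\mu_f$.

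For the second identity, $\mu_g=\mu_f$ already gives $\vartot{\mu_f}=\vartot{\mu_g}$, so it remains to prove $\vartot{\mu_g}=\mu_{V_g}$. Both sides are positive measures of local bounded variation, hence by the converse part of Theorem~\ref{existence of Stietjes measure} each is the Stieltjes measure of its own distribution function $t\longmapsto\lambda(\clsxint{\inf I,t}\cap I)$. Because $g$ is left-continuous on $\Int(I)$, the variation function $V_g$ is left-continuous as well (the left jump of $V_g$ at $t$ equals $\norm{g(t)-g(t-)}{}=0$), so that $\mu_{V_g}(\clsxint{\inf I,t}\cap I)=V_g(t-)-V_g(\inf I-)=V_g(t)$. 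Thus the whole claim collapses to the single identity
\begin{equation}\notag
  \vartot{\mu_g}(\clsxint{\inf I,t}\cap I)=\pV(g,\clint{\inf I,t}\cap I)=V_g(t),\qquad t\in I .
\end{equation}

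To establish this I would prove two inequalities. The inequality ``$\ge$'' is the easy one: for any partition $\inf I\le t_0<\cdots<t_m\le t$ in $I$, left-continuity gives $\mu_g(\clsxint{t_{j-1},t_j})=g(t_j-)-g(t_{j-1}-)=g(t_j)-g(t_{j-1})$, and since the half-open intervals $\clsxint{t_{j-1},t_j}$ are disjoint and contained in $\clsxint{\inf I,t}\cap I$ we get $\sum_j\norm{g(t_j)-g(t_{j-1})}{}=\sum_j\norm{\mu_g(\clsxint{t_{j-1},t_j})}{}\le\vartot{\mu_g}(\clsxint{\inf I,t}\cap I)$; taking the supremum gives $V_g(t)\le\vartot{\mu_g}(\clsxint{\inf I,t}\cap I)$. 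The reverse inequality is the main obstacle: one must dominate $\sum_n\norm{\mu_g(B_n)}{}$ for an \emph{arbitrary} Borel partition $\clsxint{\inf I,t}\cap I=\bigsqcup_n B_n$ by $\pV(g,\clint{\inf I,t}\cap I)$. Here I would invoke the regularity of the finite positive measure $\vartot{\mu_g}$ to approximate each $B_n$ from outside by a finite union of intervals, refine to a common interval partition, and telescope, so that the sum is controlled by terms of the type already bounded above, the error being absorbed by regularity; this is exactly the classical reduction of the total variation of a vector measure on the line to the pointwise variation of its left-continuous distribution function, as in \cite[Section~III.17.2--3]{Din67}. Once the displayed identity holds, the converse part of Theorem~\ref{existence of Stietjes measure} yields $\vartot{\mu_g}=\mu_{V_g}$, and taking $B=I$ gives the total masses $\vartot{\mu_f}(I)=\pV(g,I)$, the last equality in the statement.
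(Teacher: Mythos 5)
The paper itself does not prove this Proposition: it is imported, together with Theorem \ref{existence of Stietjes measure}, from \cite[Section III.17.2--3]{Din67}, so there is no in-paper argument to compare against. Your reconstruction follows the classical route. The first half is complete and correct: $g$ and $f$ have the same one-sided limits at every point (including the endpoints under the stated conventions), $g\in\BV_\loc(I;\H)$ because $\pV(g,J)\le\pV(f,J)$, and since the four interval formulas in Theorem \ref{existence of Stietjes measure} involve only one-sided limits, its uniqueness clause forces $\mu_g=\mu_f$.

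The second half contains the one genuinely nontrivial step, and that is precisely the step you do not carry out. The inequality $V_g(t)\le\vartot{\mu_g}(\clsxint{\inf I,t}\cap I)$ via partitions is fine, but the reverse direction --- dominating $\sum_n\norm{\mu_g(B_n)}{}$ for an \emph{arbitrary} Borel partition --- is only sketched, and as described it does not quite work: if you outer-approximate each $B_n$ separately by a finite union of intervals, the approximants overlap, so they cannot be ``refined to a common interval partition'' and telescoped. The standard repair is to prove the set-wise domination $\norm{\mu_g(B)}{}\le\mu_{V_g}(B)$ first for half-open intervals, then for open sets (countable disjoint unions of open intervals), then for arbitrary Borel $B$ by outer regularity of the finite positive measure $\vartot{\mu_g}+\mu_{V_g}$ on compact subintervals; only after that does summing over the partition give $\vartot{\mu_g}\le\mu_{V_g}$, which combined with your first inequality and $\mu_{V_g}(\clsxint{\inf I,t}\cap I)=V_g(t)$ yields equality of the two measures. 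Since you ultimately defer this step to the very reference the paper cites, the proposal is best read as a correct outline whose decisive lemma is outsourced rather than proved. A last small point: $g$, hence $V_g$, need not be left continuous at $t=\sup I$ when $\sup I\in I$ and $f(\sup I)\ne f(\sup I-)$, so the identity $\mu_{V_g}(\clsxint{\inf I,t}\cap I)=V_g(t)$ and the atom of $\vartot{\mu_g}$ at $\sup I$ must be checked separately there (the same endpoint caveat already affects the converse part of Theorem \ref{existence of Stietjes measure} as stated).
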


The measure $\mu_{f}$ is called \emph{Lebesgue-Stieltjes measure} or \emph{differential measure} of $f$. Let us see the connection with the distributional derivative. If $f \in \BV_\loc(I;\H)$ and if $\overline{f}: \ar \function \H$ is defined by
\begin{equation}\label{extension to R}
  \overline{f}(t) :=
  \begin{cases}
    f(t) 	& \text{if $t \in I$} \\
    f(\inf I)	& \text{if $\inf I \in \ar$, $t \not\in I$, $t \le \inf I$} \\
    f(\sup I)	& \text{if $\sup I \in \ar$, $t \not\in I$, $t \ge \sup I$}
  \end{cases},
\end{equation}
then, as in the scalar case, it turns out (cf. \cite[Section 2]{Rec11a}) that $\mu_{f}(B) = \D \overline{f}(B)$ for every 
$B \in \borel(\ar)$, where $\D\overline{f}$ is the distributional derivative of $\overline{f}$, i.e.
\[
  - \int_\ar \varphi'(t) \overline{f}(t) \de t = \int_{\ar} \varphi \de \D \overline{f} 
  \qquad \forall \varphi \in \Czero_{c}^{1}(\ar;\ar),
\]
$\Czero_{c}^{1}(\ar;\ar)$ being the space of real continuously differentiable functions on $\ar$ with compact support.
Observe that $\D \overline{f}$ is concentrated on $I$: $\D \overline{f}(B) = \mu_f(B \cap I)$ for every $B \in \borel(I)$, hence in the remainder of the paper, if $f \in \BV_\loc(I,\H)$ then we will simply write
\begin{equation}
  \D f := \D\overline{f} = \mu_f, \qquad f \in \BV(I;\H),
\end{equation}
and from the previous discussion it follows that 
\begin{equation}
  \norm{\D f}{} = \vartot{\D f}(I) = \norm{\mu_f}{}  = \pV(f,I) \qquad \forall f \in \BV^\r(I;\H).
\end{equation}
If $I$ is bounded and $p \in \clint{1,\infty}$, then the classical Sobolev space $\W^{1,p}(I;\H)$ consists of those functions $f \in \Czero(I;\H)$ such that $\D f = g\leb^1$ for some $g \in \L^p(I;\H)$ and we have  $\W^{1,p}(I;\H) = \AC^p(I;\H)$. Let us also recall that if $f \in \W^{1,1}(I;\H)$ then {}{ the derivative $f'(t)$  exists} for $\leb^1$-a.e. in $t \in I$, 
$\D f = f' \leb^1$, and $\V(f,I) = \int_I\norm{f'(t)}{}\de t$ (cf., e.g. \cite[Appendix]{Bre73}).

In \cite[Lemma 6.4 and Theorem 6.1]{Rec11a} it is proved that

\begin{Prop}\label{P:BV chain rule}
Assume that $I, J \subseteq \ar$ are intervals and that $h : I \function J$ is nondecreasing.
\begin{itemize}
\item[(i)]
  $\D h(h^{-1}(B)) = \leb^{1}(B)$ for every $B \in \borel(h(\cont(h)))$.
\item[(ii)]
 If $f \in \Lip(J;\H)$ and $g : I \function \H$ is defined by
\begin{equation}\notag
  g(t) := 
  \begin{cases}
    f'(h(t)) & \text{if $t \in \cont(h)$} \\
    \ \\
    \dfrac{f(h(t+)) - f(h(t-))}{h(t+) - h(t-)} & \text{if $t \in \discont(h)$}
  \end{cases},
\end{equation}
then $f \circ h \in \BV(I;\H)$ and $\D\ \!(f \circ h) = g \D h$. This result holds even if $f'$ is replaced by any of its 
$\leb^{1}$-representatives.
\end{itemize}
\end{Prop}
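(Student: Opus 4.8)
The plan is to prove the two statements in order, using part (i) as the essential change-of-variables device for part (ii).

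\emph{Part (i).} The assignment $B \longmapsto \D h(h^{-1}(B))$ is the image measure of $\D h$ under $h$; since $h$ is nondecreasing, preimages of disjoint sets are disjoint and preimages commute with countable unions, so this is a well-defined nonnegative measure on $\borel(h(\cont(h)))$. As $\leb^1$ is another such measure and both are $\sigma$-finite on compact subintervals, it suffices, by uniqueness of measures agreeing on a generating $\pi$-system, to verify the identity on relatively open intervals $\opint{\alpha,\beta}$ with $\alpha,\beta \in h(\cont(h))$. Writing $\alpha = h(s)$, $\beta = h(t)$ with $s,t \in \cont(h)$, $s<t$, monotonicity makes $h^{-1}(\opint{\alpha,\beta})$ an interval $\opint{a,b}$ with $h(a+)=\alpha$ and $h(b-)=\beta$; the Stieltjes formula $\D h(\opint{a,b}) = h(b-)-h(a+)$ from Theorem \ref{existence of Stietjes measure} then gives $\D h(h^{-1}(\opint{\alpha,\beta})) = \beta-\alpha = \leb^1(\opint{\alpha,\beta})$. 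The restriction $B \subseteq h(\cont(h))$ is exactly what guarantees that the atoms of $\D h$, located at the jump points of $h$ and corresponding to the gaps $\opint{h(t-),h(t+)}$ in the range, are not seen by this computation.

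\emph{Part (ii).} First, $f\circ h$ has (locally) bounded variation: from $\norm{f(h(t_j)) - f(h(t_{j-1}))}{} \le \Lipcost(f)\,|h(t_j)-h(t_{j-1})|$ one obtains by summation $\pV(f\circ h, J) \le \Lipcost(f)\,\pV(h,J)$ for every subinterval $J$, so $f\circ h$ inherits bounded variation from the nondecreasing $h$, whence $f\circ h \in \BV(I;\H)$. By the uniqueness part of Theorem \ref{existence of Stietjes measure}, to prove $\D(f\circ h) = g\,\D h$ it is enough to show that $g$ is $\D h$-integrable (indeed $\norm{g(t)}{}\le \Lipcost(f)$ everywhere) and that $\int_K g\,\de\D h$ equals the corresponding increment of $f\circ h$ for each compact interval $K$.

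The strategy is to split $\D h$ into its atomic part, carried by $\discont(h)$, and its diffuse part. At a jump point $t\in\discont(h)$ the atom has mass $\D h(\{t\}) = h(t+)-h(t-)$, and since $f$ is continuous the definition of $g$ yields $g(t)\,\D h(\{t\}) = f(h(t+))-f(h(t-))$, precisely the jump of $f\circ h$ at $t$; thus the atomic parts match. For the diffuse part, on $\cont(h)$ one has $g = f'\circ h$, and part (i), applied to the image measure, gives $\int_{\cont(h)\cap K}(f'\circ h)\,\de\D h = \int_{h(\cont(h)\cap K)} f'(y)\,\de y$; since $f\in\Lip(J;\H)$ is absolutely continuous, the fundamental theorem of calculus turns the right-hand side into the increment of $f$ over the range, which is exactly the diffuse increment of $f\circ h$. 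Adding the atomic and diffuse contributions reconstructs the full increment, giving $\D(f\circ h)=g\,\D h$. Replacing $f'$ by another $\leb^1$-representative changes $g$ only on $h^{-1}(N)\cap\cont(h)$ for some $\leb^1$-null $N\subseteq J$, and part (i) shows $\D h(h^{-1}(N)\cap\cont(h)) = \leb^1(N\cap h(\cont(h))) = 0$, so $\int g\,\de\D h$ is unaffected.

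The main obstacle I anticipate is the bookkeeping in the diffuse part of (ii): one must justify the change of variables for the Hilbert-space valued, merely $\leb^1$-a.e.\ defined integrand $f'\circ h$. This is precisely where part (i) is indispensable, since it is what makes $f'\circ h$ well defined $\D h$-a.e.\ and independent of the chosen representative, and what converts the $\D h$-integral on $I$ into an ordinary Lebesgue integral on $J$ where the fundamental theorem of calculus for Lipschitz functions applies.
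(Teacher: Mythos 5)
The paper does not actually prove this proposition: it is quoted from \cite{Rec11a} (Lemma 6.4 and Theorem 6.1 there), so there is no internal argument to compare yours with line by line. Your overall architecture is the natural one and matches the cited source in spirit: treat (i) as a pushforward identity for $\D h$ under $h$, and prove (ii) by splitting $\D h$ into its atomic part, carried by $\discont(h)$, where the difference quotient defining $g$ reproduces exactly the jump $f(h(t+))-f(h(t-))$ of $f\circ h$, and its diffuse part, where (i) converts the $\D h$-integral of $f'\circ h$ into a Lebesgue integral over the range and the absolute continuity of $f$ finishes the job. Part (ii) and the remark on $\leb^1$-representatives are sound \emph{modulo} part (i).

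The genuine gap is in part (i), and it matters because (ii) leans on it. The two set functions $A \longmapsto \D h(h^{-1}(A))$ and $A \longmapsto \leb^{1}(A)$ do \emph{not} agree as Borel measures on $\ar$: on a gap $\opint{h(t-),h(t+)}$ with $t \in \discont(h)$ the first is concentrated at the single point $h(t)$ while the second is diffuse. The sets $\opint{\alpha,\beta}$ on which you verify the equality are in general not elements of $\borel(h(\cont(h)))$, and the $\sigma$-algebra they generate cannot separate points lying inside a gap, so it does not contain $\borel(h(\cont(h)))$; the uniqueness theorem for measures agreeing on a generating $\pi$-system therefore cannot be invoked as you state it. In fact the unrestricted-preimage identity can fail on $\borel(h(\cont(h)))$: take $I=\clint{0,2}$, $h\equiv 0$ on $\clint{0,1}$ and $h\equiv 1$ on $\cldxint{1,2}$; then $0=h(1)=h(1/2)\in h(\cont(h))$, yet $h^{-1}(\{0\})=\clint{0,1}$ contains the jump point $1$ and carries the atom $h(1+)-h(1-)=1$, while $\leb^{1}(\{0\})=0$. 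What your part (ii) actually uses — and what the paper itself uses in the display computing $\D\ell_{\C}(\{t\in\cont(\ell_{\C})\,:\,\ell_{\C}(t)\in Z\})$ — is the version with the preimage cut down to the continuity set, $\D h\bigl(h^{-1}(B)\cap\cont(h)\bigr)=\leb^{1}(B)$. To prove that version you must remove the atoms by hand: show that $\D h\bigl(h^{-1}(\opint{\alpha,\beta})\cap\cont(h)\bigr)=(\beta-\alpha)-\sum\bigl(h(t+)-h(t-)\bigr)$, the sum running over the jumps whose gaps lie in $\opint{\alpha,\beta}$, and that this equals $\leb^{1}\bigl(\opint{\alpha,\beta}\cap h(\cont(h))\bigr)$ because the complement of $h(\cont(h))$ in $\opint{\alpha,\beta}$ is precisely the union of those gaps; only then can a monotone-class argument on the trace $\sigma$-algebra be run. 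With (i) corrected in this form, your part (ii) goes through.
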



\section{Main results}

In this section we state the main theorem of the present paper.

\begin{Thm}\label{main thm}
Assume that $\C \in \BR_\loc(\clsxint{0,\infty};\Conv_\H)$ and $y_0 \in \H$. Then there exists a unique 
$y \in \BV_\loc(\clsxint{0,\infty};\H)$ such that there exists a Borel measure 
$\mu : \borel(\clsxint{0,\infty}) \function \clint{0,\infty}$ and a function $v \in \L^1_\loc(\mu;\H)$ such that
\begin{align}
  & y(t) \in \C(t), \label{constr. arbBV sweep} \\
  & \D y = v \mu, \label{Dy = vmu}\\
  & -v(t) \in N_{\C(t)}(y(t)) \qquad \text{for $\mu$-a.e. $t \in \cont(\C)$}, \label{diff inclu}\\
  & y(t) = \Proj_{\C(t)}(y(t-)), \qquad
   y(t+) = \Proj_{\C(t+)}(y(t)) \qquad \forall t \in \discont(\C) \!\ \setmeno \!\ \{0\} \label{jump cond}\\
  & y(0) = \Proj_{\C(0)}(y_0), \quad y(0+) = \Proj_{\C(0+)(y(0))}. \label{i.c. arbBV sweep}
\end{align}
Moreover $y$ is left continuous (respectively: right continuous) at $t \ge 0$ if and only if $u$ is left continuous (respectively: right continuous) at $t \ge 0$.
\end{Thm}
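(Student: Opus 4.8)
The plan is to reduce the statement to the known Lipschitz continuous case by reparametrizing $\C$ according to its arc length for the excess. First I would set $\ell := \ell_\C$, the nondecreasing function $\ell(t) = \ret(\C;\clint{0,t})$, and let $L := \sup_{t}\ell(t) \in \clint{0,\infty}$. By the facts recalled in Section \ref{S:Preliminaries}, the jumps of $\ell$ at a point $t$ record exactly the one–sided excesses $\e(\C(t-),\C(t)) = \ell(t)-\ell(t-)$ and $\e(\C(t),\C(t+)) = \ell(t+)-\ell(t)$, so the image $\ell(\clsxint{0,\infty})$ misses precisely the open ``gap'' intervals $\opint{\ell(t-),\ell(t)}$ and $\opint{\ell(t),\ell(t+)}$ produced by the discontinuities of $\C$; since $t\in\cont(\C)$ is equivalent to the continuity of $\ell$ at $t$, on the image of $\ell$ we already have a well defined $1$-Lipschitz (for $\e$) curve $\sigma \longmapsto \C(\ell^{-1}(\sigma))$. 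The issue, as stressed in the introduction, is to fill these gaps so as to obtain a curve $\Ctilde \in \Lip_\loc(\clsxint{0,L};\Conv_\H)$ on the whole interval.

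Next I would invoke the family of convex valued geodesics for the excess constructed in Section 6: on each gap $\opint{\ell(t-),\ell(t)}$ I insert the geodesic joining $\C(t-)$ to $\C(t)$, and on $\opint{\ell(t),\ell(t+)}$ the geodesic joining $\C(t)$ to $\C(t+)$, parametrized by $\e$-arc length, so that the resulting $\Ctilde$ is $1$-Lipschitz and satisfies $\Ctilde(\ell(t)) = \C(t)$ for every $t$. The decisive feature demanded of these geodesics — and the main obstacle of the whole argument — is that the Lipschitz sweeping process run along such a geodesic must realize the metric projection: starting from a point $p$ of the initial set, its endpoint must be $\Proj$ of $p$ onto the terminal set. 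This is exactly the property that fails for an arbitrary connection, as discussed above, and it is what forces the specific geodesics of Section 6.

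With $\Ctilde$ at hand I would take the unique Lipschitz solution $\utilde \in \Lip_\loc(\clsxint{0,L};\H)$ of the sweeping process associated with $\Ctilde$ and datum $\Proj_{\C(0)}(y_0)$, existence and uniqueness being the classical content reviewed in Sections 4 and 5 together with the integral reformulation established there, and then define
\[
  y(t) := \utilde(\ell(t)), \qquad t \ge 0.
\]
Since $\utilde$ is continuous, $y(t\pm) = \utilde(\ell(t\pm))$, hence $y \in \BV_\loc$ and $y(t) \in \Ctilde(\ell(t)) = \C(t)$, giving \eqref{constr. arbBV sweep}. For \eqref{Dy = vmu}--\eqref{diff inclu} I would apply the chain rule of Proposition \ref{P:BV chain rule} with $h = \ell$ and $f = \utilde$: this yields $\D y = v\,\D\ell$ with $\mu := \D\ell = \mu_\ell$ the (positive) Stieltjes measure of $\ell$ furnished by Theorem \ref{existence of Stietjes measure}, and with $v(t) = \utilde'(\ell(t))$ at continuity points and $v(t) = (\utilde(\ell(t+))-\utilde(\ell(t-)))/(\ell(t+)-\ell(t-))$ at jump points. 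On $\cont(\C)=\cont(\ell)$ part (i) of Proposition \ref{P:BV chain rule} transports $\leb^1$-null sets to $\mu$-null sets, so $-v(t) = -\utilde'(\ell(t)) \in N_{\Ctilde(\ell(t))}(\utilde(\ell(t))) = N_{\C(t)}(y(t))$ for $\mu$-a.e.\ $t$, which is \eqref{diff inclu}; the flat intervals of $\ell$, on which $\C$ is only increasing for inclusion and $y$ is constant, carry no $\mu$-mass and automatically satisfy the constraint. Finally \eqref{jump cond}--\eqref{i.c. arbBV sweep} follow from the projection property of the geodesics applied on each gap to the values $y(t-), y(t), y(t+)$.

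For uniqueness I would reverse the correspondence: given any $y$ satisfying \eqref{constr. arbBV sweep}--\eqref{i.c. arbBV sweep}, define $\utilde(\ell(t)) := y(t)$ on the image of $\ell$ and extend $\utilde$ across each gap by the Lipschitz flow along the corresponding geodesic; the constraint together with the projection identities \eqref{jump cond}--\eqref{i.c. arbBV sweep} guarantees that the two prescriptions match at the gap endpoints, so $\utilde$ is a Lipschitz solution for $\Ctilde$, whence $\utilde$ — and therefore $y = \utilde\circ\ell$ — is forced by uniqueness in the Lipschitz case. The last assertion is then immediate from $y(t\pm)=\utilde(\ell(t\pm))$ and the continuity of $\utilde$: left (respectively right) continuity of $y$ at $t$ holds exactly when $\utilde$ stays put across the left (respectively right) gap, and by the projection property this happens precisely when $y(t-)$ already lies in $\C(t)$ (respectively $y(t)$ in $\C(t+)$), which is the claimed one-sided continuity equivalence. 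The genuinely delicate step is the construction and the projection property of the geodesics; the rest is bookkeeping built on Proposition \ref{P:BV chain rule} and the Lipschitz theory.
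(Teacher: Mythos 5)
Your existence argument is, step for step, the one the paper gives: reparametrize by $\ell_\C$, extend the induced $1$-Lipschitz curve across the gaps $\opint{\ell_\C(t-),\ell_\C(t)}$ and $\opint{\ell_\C(t),\ell_\C(t+)}$ by the $\e$-geodesics $\F_{(\A,\B)}$ of Definition \ref{D:F}, set $y=\Sw(\Ctilde,y_0)\circ\ell_\C$, obtain \eqref{Dy = vmu}--\eqref{diff inclu} with $\mu=\D\ell_\C$ from Proposition \ref{P:BV chain rule}, and obtain \eqref{jump cond}--\eqref{i.c. arbBV sweep} from the fact (Lemma \ref{L:particular sweeping process}) that the sweeping process driven by $\F_{(\A,\B)}$ ends at the projection of its initial point onto $\B$. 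The only cosmetic slip is the claim $\Ctilde(\ell_\C(t))=\C(t)$ for \emph{every} $t$: on an interval where $\ell_\C$ is constant one only has $\Ctilde(\ell_\C(t))=\C(\inf\ell_\C^{-1}(\ell_\C(t))+)\subseteq\C(t)$, which still yields \eqref{constr. arbBV sweep}.

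The uniqueness step is where you diverge from the paper, and it contains a genuine gap. The paper does \emph{not} reconstruct a Lipschitz solution from an arbitrary BV solution; it proves uniqueness directly (Lemma \ref{uniqueness}) by showing that for two solutions $y_1,y_2$ the function $t\mapsto\norm{y_1(t-)-y_2(t-)}{}^2$ is nonincreasing, combining Moreau's differentiation rule for $\norm{y_1-y_2}{}^2$ with the monotonicity of the normal cone on $\cont(\C)$ and the nonexpansivity of the projections at the atoms. Your reverse correspondence presupposes, for an \emph{arbitrary} solution $y$ with an \emph{arbitrary} Borel measure $\mu$: (i) that $y$ factors through $\ell_\C$, i.e.\ is constant on intervals where $\ell_\C$ is constant --- nothing in the formulation prevents $\mu$ from charging such an interval, and the inclusion $-v(t)\in N_{\C(t)}(y(t))$ alone does not force $v=0$ there without an energy argument; (ii) that the induced $\utilde$ is $1$-Lipschitz, i.e.\ the a priori bound $\norm{y(t)-y(s)}{}\le\ell_\C(t)-\ell_\C(s)$, which holds for the constructed solution but must be \emph{derived} for a general one; (iii) that the $\mu$-a.e.\ normality condition transports back to an $\leb^1$-a.e.\ condition on $\utilde'$, which requires first relating $\mu$ (or at least $\D y$) to $\D\ell_\C$. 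Each of these claims is essentially as hard as the two-solution contraction estimate the paper uses, so as written your uniqueness argument is circular in difficulty and incomplete; either supply (i)--(iii) or replace this step by the direct monotonicity proof of Lemma \ref{uniqueness}.
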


The following result shows that in the right continuous case the conditions 
\eqref{diff inclu}--\eqref{i.c. arbBV sweep} reduce to 
\begin{align}\label{single integral condition}
  & -v(t) \in N_{\C(t)}(y(t)) \qquad \text{for $\mu$-a.e. $t \in \opint{0,\infty}$}, \\
  & y(0) = \Proj_{\C(0)}(y_0).
\end{align}

\begin{Thm}\label{main-right continuous case}
Assume that $\C \in \BR_\loc^\r(\clsxint{0,\infty};\Conv_\H)$ and $y_0 \in \H$. Then there exists a unique 
$y \in \BV_\loc^\r(\clsxint{0,\infty};\H)$ such that there exists a Borel measure 
$\mu : \borel(\clsxint{0,\infty}) \function \clint{0,\infty}$ and a function $v \in \L^1_\loc(\mu;\H)$ such that
\begin{align}
  & y(t) \in \C(t), \label{constr. arbBV sweep-r} \\
  & \D y = v \mu, \\
  & -v(t) \in N_{\C(t)}(y(t)) \qquad \text{for $\mu$-a.e. $t \in \clsxint{0,\infty}$}, \label{diff inclu-r}\\
  & y(0) = \Proj_{\C(0)}(y_0). \label{i.c. arbBV sweep-r}
\end{align}
\end{Thm}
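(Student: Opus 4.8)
The plan is to obtain Theorem~\ref{main-right continuous case} as a corollary of Theorem~\ref{main thm}, using the right continuity of $\C$ to collapse the jump conditions \eqref{jump cond}--\eqref{i.c. arbBV sweep} into the single inclusion \eqref{diff inclu-r}. First I would apply Theorem~\ref{main thm} to the given $\C \in \BR_\loc^\r(\clsxint{0,\infty};\Conv_\H)$ and $y_0$, producing the unique $y \in \BV_\loc(\clsxint{0,\infty};\H)$ together with $\mu$ and $v \in \L^1_\loc(\mu;\H)$ satisfying \eqref{constr. arbBV sweep}--\eqref{i.c. arbBV sweep}. Since $\C \in \BR^\r_\loc$ means $\e(\C(t),\C(t+)) = 0$ for every $t$, the moving set is right continuous at every point, so the last assertion of Theorem~\ref{main thm}, which ties the one-sided continuity of $y$ to that of $\C$, gives that $y$ is right continuous at every $t \ge 0$; hence $y \in \BV_\loc^\r(\clsxint{0,\infty};\H)$, as required.

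Next I would show that the jump conditions are automatically encoded in \eqref{diff inclu-r}. Let $t \in \discont(\C)$. Right continuity gives $\C(t) \subseteq \C(t+)$, and since $y(t) \in \C(t)$ we get $\Proj_{\C(t+)}(y(t)) = y(t) = y(t+)$, so the second condition in \eqref{jump cond} merely restates the right continuity of $y$ and carries no extra information. For the first condition $y(t) = \Proj_{\C(t)}(y(t-))$ I would use its equivalent normal-cone form $y(t-) - y(t) \in N_{\C(t)}(y(t))$. From $\D y = v\mu$ one has $\D y(\{t\}) = v(t)\mu(\{t\}) = y(t+) - y(t-) = y(t) - y(t-)$, whence $-v(t)\mu(\{t\}) = y(t-) - y(t) \in N_{\C(t)}(y(t))$; because $N_{\C(t)}(y(t))$ is a cone and $\mu(\{t\}) \ge 0$, at every atom $\mu(\{t\}) > 0$ this yields $-v(t) \in N_{\C(t)}(y(t))$. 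As $\discont(\C)$ is at most countable, the jump points carrying no atom of $\mu$ form a $\mu$-null set, so together with \eqref{diff inclu} on $\cont(\C)$ we obtain $-v(t) \in N_{\C(t)}(y(t))$ for $\mu$-a.e. $t \in \clsxint{0,\infty}$, i.e. \eqref{diff inclu-r}. The initial condition \eqref{i.c. arbBV sweep-r} is exactly the first half of \eqref{i.c. arbBV sweep}, while $y(0+) = y(0)$ follows again from right continuity.

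For uniqueness I would run the argument in reverse: any $y \in \BV_\loc^\r$ satisfying \eqref{constr. arbBV sweep-r}--\eqref{i.c. arbBV sweep-r} satisfies the hypotheses of Theorem~\ref{main thm}. Indeed, the constraint and $\D y = v\mu$ are identical, restricting \eqref{diff inclu-r} to $\cont(\C)$ gives \eqref{diff inclu}, and at each $t \in \discont(\C)$ right continuity yields the second jump condition as above. If $\mu(\{t\}) > 0$, then from $-v(t) \in N_{\C(t)}(y(t))$ and $\D y(\{t\}) = v(t)\mu(\{t\})$ I recover $y(t-) - y(t) \in N_{\C(t)}(y(t))$, hence $y(t) = \Proj_{\C(t)}(y(t-))$; if $\mu(\{t\}) = 0$, then $\D y(\{t\}) = 0$ forces $y(t) = y(t-)$, and since $y(t) \in \C(t)$ both projection identities hold trivially. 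Thus $y$ solves the problem of Theorem~\ref{main thm}, and the uniqueness there forces $y$ to coincide with the function already constructed.

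The main obstacle I anticipate is precisely the bookkeeping at the atoms of $\mu$: translating the pointwise projection conditions \eqref{jump cond} into the single $\mu$-a.e. inclusion \eqref{diff inclu-r} and back. This hinges on two facts that must be checked carefully — that $N_{\C(t)}(y(t))$ is a cone, so that multiplication by the nonnegative scalar $\mu(\{t\})$ does not affect membership, and that jump points of $\C$ where $\mu$ has no atom produce no jump of $y$ and hence satisfy the projection identities automatically, so that the countable set $\discont(\C)$ is controlled by $\mu$ only through its atoms.
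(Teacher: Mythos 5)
Your proposal is correct, and it follows the same overall strategy as the paper: invoke Theorem \ref{main thm}, use the right continuity of $\C$ to get $y\in\BV^\r_\loc$ and to make the second condition in \eqref{jump cond} vacuous, and then absorb the first jump condition into the single inclusion \eqref{diff inclu-r}. Where you diverge is in the mechanism for this last step. The paper passes through the integral formulation of Lemma \ref{L:BV-int. form.}: it writes the projection identity at a jump as the variational inequality $\duality{y(t)-\zeta}{y(t)-y(t-)}\le 0$ for $\zeta\in\C(t)$, splits $\int_{\clsxint{0,T}}\duality{y-z}{\de \D y}$ into its contributions over $\cont(\C)$ and the countable set $\discont(\C)$, verifies that the sum is $\le 0$, and then concludes via the equivalence (ii)$\Rightarrow$(i) of that lemma. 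You instead argue pointwise at the atoms of $\mu$: from $\D y(\{t\})=v(t)\mu(\{t\})=y(t)-y(t-)$ and the normal-cone form of the projection you get $-v(t)\mu(\{t\})\in N_{\C(t)}(y(t))$, and the cone property lets you divide by $\mu(\{t\})>0$, while the non-atomic jump points form a $\mu$-null set. This is more elementary (it bypasses the Lebesgue-point machinery behind Lemma \ref{L:BV-int. form.}) and has the advantage of being cleanly reversible, which you exploit to prove uniqueness by showing that any solution of \eqref{constr. arbBV sweep-r}--\eqref{i.c. arbBV sweep-r} solves the problem of Theorem \ref{main thm}; the paper leaves this uniqueness step implicit. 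Both routes are sound; the paper's integral formulation buys a statement (Lemma \ref{L:BV-int. form.}) that is reused elsewhere, whereas yours keeps the corollary self-contained.
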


An analogous result holds in the left continuous case:

\begin{Thm}\label{main-left continuous case}
Assume that $\C \in \BR_\loc^\l(\clsxint{0,\infty};\Conv_\H)$ and $y_0 \in \H$. Then there exists a unique 
$y \in \BV_\loc^\l(\clsxint{0,\infty};\H)$ such that there exists a Borel measure 
$\mu : \borel(\clsxint{0,\infty}) \function \clint{0,\infty}$ and a function $v \in \L^1_\loc(\mu;\H)$ such that
\begin{align}
  & y(t) \in \C(t), \label{constr. arbBV sweep-l} \\
  & \D y = v \mu, \\
  & -v(t) \in N_{\C(t+)}(y(t+)) \qquad \text{for $\mu$-a.e. $t \in \clsxint{0,\infty}$}, \label{diff inclu-l}\\
  & y(0) = \Proj_{\C(0)}(y_0). \label{i.c. arbBV sweep-l}
\end{align}
\end{Thm}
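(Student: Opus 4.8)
The plan is to derive Theorem~\ref{main-left continuous case} from the general Theorem~\ref{main thm}, exploiting the left continuity hypothesis $\e(\C(t-),\C(t))=0$, equivalently $\C(t-)\subseteq\C(t)$, to collapse the pointwise projection conditions \eqref{jump cond}--\eqref{i.c. arbBV sweep} into the single measure inclusion \eqref{diff inclu-l}. I first apply Theorem~\ref{main thm} to obtain the unique triple $(y,\mu,v)$ satisfying \eqref{constr. arbBV sweep}--\eqref{i.c. arbBV sweep}. The final clause of Theorem~\ref{main thm} shows that $y$ inherits the left continuity of $\C$, so that $y\in\BV_\loc^\l(\clsxint{0,\infty};\H)$; at a point of $\discont(\C)$ this is also transparent by hand, since the constraint gives $y(t-)\in\C(t-)\subseteq\C(t)$, so $\Proj_{\C(t)}(y(t-))=y(t-)$ and the first relation in \eqref{jump cond} forces $y(t)=y(t-)$.

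Next I convert the remaining conditions. Left continuity makes the differential measure carry each jump entirely on the right: at every atom $t$ of $\mu$ one has $v(t)\mu(\{t\})=\D y(\{t\})=y(t+)-y(t-)=y(t+)-y(t)$. For $t\in\discont(\C)\setmeno\{0\}$ the relation $y(t+)=\Proj_{\C(t+)}(y(t))$ in \eqref{jump cond} is equivalent to $y(t)-y(t+)\in N_{\C(t+)}(y(t+))$; since $y(t)-y(t+)=-v(t)\mu(\{t\})$ and $N_{\C(t+)}(y(t+))$ is a cone, dividing by $\mu(\{t\})>0$ gives exactly $-v(t)\in N_{\C(t+)}(y(t+))$, and the atom at $t=0$ is handled identically through the convention $y(0-)=y(0)$ and the second relation in \eqref{i.c. arbBV sweep}. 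At the remaining points, that is for $\mu$-a.e.\ $t\in\cont(\C)$, the solution does not jump and, as explained below, the measure $\mu$ ignores the points where $\C$ only inflates, so that $\C(t+)=\C(t)$ and $y(t+)=y(t)$ hold $\mu$-a.e.\ there; consequently \eqref{diff inclu} reduces to \eqref{diff inclu-l}. Collecting the cases yields \eqref{diff inclu-l} for $\mu$-a.e.\ $t$, while \eqref{constr. arbBV sweep-l}, \eqref{Dy = vmu} and \eqref{i.c. arbBV sweep-l} are inherited verbatim.

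For uniqueness I reverse the implications and quote the uniqueness already contained in Theorem~\ref{main thm}. Let $y\in\BV_\loc^\l$ satisfy \eqref{constr. arbBV sweep-l}--\eqref{i.c. arbBV sweep-l} with some pair $(\mu,v)$. Left continuity gives $y(t-)=y(t)\in\C(t)$, so the first relation in \eqref{jump cond} holds at every $t$; reading the cone computation backwards, $y(t)-y(t+)=-v(t)\mu(\{t\})\in N_{\C(t+)}(y(t+))$ together with $y(t+)\in\C(t+)$ (from the right limit in the constraint) yields $y(t+)=\Proj_{\C(t+)}(y(t))$, which is the second relation in \eqref{jump cond} and the initial relation in \eqref{i.c. arbBV sweep}. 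At $t\in\cont(\C)$ one always has $y(t+)=y(t)$---if $\mu(\{t\})=0$ because $\D y(\{t\})=0$, and if $\mu(\{t\})>0$ because $y(t)\in\C(t)\subseteq\C(t+)$ forces $\Proj_{\C(t+)}(y(t))=y(t)$---whence $\C(t)\subseteq\C(t+)$ and the ensuing inclusion $N_{\C(t+)}(y(t+))\subseteq N_{\C(t)}(y(t))$ turn \eqref{diff inclu-l} back into \eqref{diff inclu}. Thus $y$ solves the general problem and is uniquely determined by Theorem~\ref{main thm}.

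The main obstacle is the asymmetry of the excess. Unlike the Hausdorff metric, $\e$ records only the shrinking of $\C$, so a point of $\cont(\C)$ may still be one where $\C$ inflates, $\C(t)\subsetneq\C(t+)$, and there $N_{\C(t+)}(y(t+))$ is a proper subcone of $N_{\C(t)}(y(t))$; hence \eqref{diff inclu} and \eqref{diff inclu-l} are \emph{not} equivalent pointwise. The uniqueness direction uses only the free inclusion $N_{\C(t+)}\subseteq N_{\C(t)}$, but the existence direction needs the construction's measure $\mu$ to assign no mass to the inflation set---equivalently, that $\mu$ is governed by the retraction $\ell_\C$, which is blind to inflation---so that the discrepancy lives on a $\mu$-null set. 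Securing this negligibility, together with the bookkeeping of the atom at $t=0$, is where the genuine work lies; once it is in place, both halves of the statement follow from Theorem~\ref{main thm}.
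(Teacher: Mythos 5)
Your uniqueness argument and your handling of the atoms of $\mu$ are sound, but the existence half has a genuine gap exactly where you yourself locate ``the genuine work'': the claim that $\mu=\D\ell_\C$ charges no mass on the inflation set $E:=\{t\in\cont(\C)\ :\ \C(t)\subsetneq\C(t+)\}$ is not only left unproved, it is false in general. The excess is blind to inflation, so inflation can coexist with strictly increasing retraction on a set of full measure: take $\H=\ell^2(\clint{0,1})\times\ar$ and
$\C(t)=\overline{\mathrm{co}}\left(\{0\}\cup\{e_s\ :\ s<t\}\right)\times\{x\in\ar\ :\ x\le 1-t\}$.
Then $\e(\C(s),\C(t))=t-s$ for $s<t$, so $\ell_\C(t)=t$ and $\mu=\leb^1$, while for \emph{every} $t$ one has $e_t\in\C(t+)\setminus\C(t)$ (its distance from the first factor of $\C(t)$ is at least $1$), so that $E$ has full $\mu$-measure. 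Since $\C(t)\subseteq\C(t+)$ forces $N_{\C(t+)}(y(t))\subseteq N_{\C(t)}(y(t))$, the passage from \eqref{diff inclu} to \eqref{diff inclu-l} on $\cont(\C)$ is a genuine strengthening that cannot be obtained by discarding a $\mu$-null set.

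What actually closes this step is the mechanism of Lemma \ref{L:BV-int. form.}, which is how the paper proves Theorem \ref{main-right continuous case} and, ``analogously'', Theorem \ref{main-left continuous case}: one assembles the integral inequality from the jump contributions (your cone computation at atoms, in integrated form) and from \eqref{diff inclu} on $\cont(\C)$, and then runs the one-sided Lebesgue-point argument of that lemma. The key observation is that in the implication (ii)$\Rightarrow$(i) the test selection $\zeta(\tau)=\Proj_{\C(\tau)}(\zeta_t)$ on $\clint{t,t+h}$ satisfies $\zeta(\tau)\to\zeta_t$ as $\tau\to t+$ for \emph{every} $\zeta_t\in\C(t+)$, by the very definition of $\C(t+)$ as a liminf; hence at $\mu$-a.e.\ non-atom the averaging yields $\duality{y(t)-\zeta_t}{v(t)}\le 0$ for all $\zeta_t\in\C(t+)$, i.e.\ $-v(t)\in N_{\C(t+)}(y(t+))$ directly, with no need for $\C(t+)=\C(t)$. (Equivalently, for the reparametrized Lipschitz solution one averages $\duality{\Proj_{\Ctilde(\tau)}(\zeta)-\yhat(\tau)}{\yhat'(\tau)}\ge 0$ over $\tau\in\clint{\sigma,\sigma+h}$ and lets $h\to 0$ at a right Lebesgue point of $\yhat'$.) This one-sided argument is the missing ingredient; without it, or with the false negligibility of $E$ in its place, your existence proof does not close.
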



\section{Uniqueness and integral formulations}\label{Uniqueness}

We start by proving the uniqueness of solutions to \eqref{constr. arbBV sweep}--\eqref{i.c. arbBV sweep} in
Theorem \ref{main thm}.

\begin{Lem}\label{uniqueness}
Assume that $\C \in \BR_\loc(\clsxint{0,\infty};\Conv_\H)$ and $y_0 \in \H$. Then there exists a unique 
$y \in \BV_\loc(\clsxint{0,\infty};\H)$ such that there exists a Borel measure 
$\mu : \borel(\clsxint{0,\infty}) \function \clint{0,\infty}$ and a function $v \in \L^1_\loc(\mu;\H)$ satisfying
\eqref{constr. arbBV sweep}--\eqref{i.c. arbBV sweep}.
\end{Lem}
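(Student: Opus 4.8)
The plan is to prove uniqueness by a contraction argument: for two triples $(y_1,\mu_1,v_1)$ and $(y_2,\mu_2,v_2)$ satisfying \eqref{constr. arbBV sweep}--\eqref{i.c. arbBV sweep}, I will show that $t \mapsto \norm{y_1(t)-y_2(t)}{}^2$ is non-increasing. Since \eqref{i.c. arbBV sweep} forces $y_1(0)=y_2(0)=\Proj_{\C(0)}(y_0)$, this will give $y_1\equiv y_2$. As a preliminary reduction I replace the two measures by the common reference measure $\mu:=\mu_1+\mu_2$: both $\D y_i$ are $\mu$-absolutely continuous, so by Radon--Nikodym I may rewrite $\D y_i = v_i\mu$ after relabelling the densities. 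Because $N_{\C(t)}(x)$ is a cone and the new densities differ from the old ones by a nonnegative scalar factor (and vanish where that factor vanishes, so that $0\in N_{\C(t)}(y_i(t))$ trivially holds), the inclusions \eqref{diff inclu} survive with respect to $\mu$. I then set $w:=y_1-y_2\in\BV_\loc(\clsxint{0,\infty};\H)$, so that $\D w=(v_1-v_2)\mu$.

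The main tool is a Moreau-type integration-by-parts for the differential measure of $\norm{w}{}^2$. For $s<t$ one has, by Theorem \ref{existence of Stietjes measure},
\[
  \norm{w(t+)}{}^2-\norm{w(s-)}{}^2=\int_{\clint{s,t}}\duality{w(\tau-)+w(\tau+)}{\de\D w(\tau)},
\]
the integrand having $\mu$-density $\duality{w(\tau-)+w(\tau+)}{v_1(\tau)-v_2(\tau)}$; on the diffuse part of $\mu$ this reduces to $2\duality{w}{v_1-v_2}$, while on each atom it equals $\norm{w(\tau+)}{}^2-\norm{w(\tau-)}{}^2$. I will verify that this density is $\le 0$ $\mu$-a.e. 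On the diffuse part, $\mu$ charges no countable set, hence $\mu$-a.e. point lies in $\cont(\C)\cap\cont(w)$; there \eqref{diff inclu} holds at both $y_1$ and $y_2$, and since $y_1(\tau),y_2(\tau)$ lie in the \emph{same} set $\C(\tau)$, the monotonicity of $N_{\C(\tau)}$ recalled after \eqref{normal cone} yields $\duality{v_1(\tau)-v_2(\tau)}{y_1(\tau)-y_2(\tau)}\le 0$, so the density is nonpositive.

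It remains to handle the atoms. At an atom $\tau\in\discont(\C)$ I use \eqref{jump cond}: writing the jump as the composition of the two projections $y_i(\tau)=\Proj_{\C(\tau)}(y_i(\tau-))$ and $y_i(\tau+)=\Proj_{\C(\tau+)}(y_i(\tau))$ and invoking the nonexpansiveness of the projection twice, I obtain $\norm{w(\tau+)}{}\le\norm{w(\tau)}{}\le\norm{w(\tau-)}{}$, so the atomic contribution $\norm{w(\tau+)}{}^2-\norm{w(\tau-)}{}^2$ is $\le 0$. Granting that all atoms contribute nonpositively, the display above gives $\norm{w(t+)}{}^2\le\norm{w(0-)}{}^2=0$ for every $t$; together with the jump relations (which propagate $w(\tau-)=0$ to $w(\tau)=0$ via a single projection) this forces $w\equiv 0$, i.e. $y_1=y_2$, establishing uniqueness.

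The delicate point I expect to be the main obstacle is the treatment of atoms of $\mu$ located in $\cont(\C)$, where only the normality condition \eqref{diff inclu} is available and the clean two-projection estimate does not apply. The natural way to dispose of them is the a priori bound $\norm{y_i(\tau+)-y_i(\tau-)}{}\le\e(\C(\tau-),\C(\tau))+\e(\C(\tau),\C(\tau+))=\ell_\C(\tau+)-\ell_\C(\tau-)$, which confines the atoms of $\D y_i$ to the points where $\ell_\C$ jumps, hence to $\discont(\C)$; since $\ell_\C$ is continuous at every $\tau\in\cont(\C)$, no spurious jumps occur there and such atoms do not arise. The difficulty is precisely that, at continuity points, this estimate cannot be read off from the projection formulas and must instead be reconciled with the fact that the differential measure $\D y_i$ records only the left/right limits $y_i(\tau\pm)$ and is blind to the pointwise value $y_i(\tau)$ appearing in \eqref{diff inclu}.
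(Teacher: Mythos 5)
Your strategy coincides with the paper's: you split the differential measure of $\norm{y_1-y_2}{}^2$ into the part carried by $\cont(\C)$, handled via Moreau's product rule for $\D(\norm{w}{}^2)$ and the monotonicity of $x\longmapsto N_{\C(t)}(x)$, and the atoms at $\discont(\C)$, handled by applying the nonexpansiveness of the projection twice through \eqref{jump cond}. Your preliminary reduction to the common dominating measure $\mu_1+\mu_2$ is a point of rigor that the paper leaves implicit, and it is carried out correctly.

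The gap is exactly where you place it, and your proposed patch does not close it. The bound $\norm{y_i(\tau+)-y_i(\tau-)}{}\le\e(\C(\tau-),\C(\tau))+\e(\C(\tau),\C(\tau+))$ is not a consequence of \eqref{constr. arbBV sweep}--\eqref{i.c. arbBV sweep}: conditions \eqref{Dy = vmu}--\eqref{diff inclu} do not prevent $\mu$ from having an atom at a point $\tau\in\cont(\C)$ at which $y$ jumps along a direction $-n$ with $n\in N_{\C(\tau)}(y(\tau))$. For instance, with $\H=\ar$, $\C(t)\equiv\clint{0,1}$, $y_0=1$ and $\tau>0$, the function equal to $1$ on $\clint{0,\tau}$ and to $0$ on $\opint{\tau,\infty}$ satisfies all of \eqref{constr. arbBV sweep}--\eqref{i.c. arbBV sweep} with $\mu=\delta_\tau$ and $v(\tau)=-1$, because $-v(\tau)=1\in N_{\clint{0,1}}(y(\tau))$ and \eqref{jump cond} is vacuous; at such an atom the density $\duality{w(\tau-)+w(\tau+)}{v_1(\tau)-v_2(\tau)}$ in your integration-by-parts formula is strictly positive, so the monotonicity argument breaks down there. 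You should be aware that the paper's own proof passes over the very same point: its first chain of inequalities invokes \cite{Mor76} to get $\de\D(\norm{y_1-y_2}{}^2)\le 2\duality{y_1-y_2}{\de\D(y_1-y_2)}$ on $\cont(\C)$ with the \emph{pointwise} values of $y_1-y_2$, and at an atom of $\D(y_1-y_2)$ lying in $\cont(\C)$ this amounts to $\norm{w(\tau+)-w(\tau)}{}\le\norm{w(\tau-)-w(\tau)}{}$, which is precisely the missing information. So you have not overlooked an idea that the paper supplies; both arguments become complete only under the additional requirement that $\discont(y_i)\subseteq\discont(\C)$ (equivalently, that $\mu$ charges no point of $\cont(\C)$), which holds for the solution constructed in Section 7 but is not forced by the conditions as stated.
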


\begin{proof}
Assume by contradiction that there are two solutions $y_1$ and $y_2$.
If $B \in \borel(\clsxint{0,\infty})$, then by \cite[Proposition 2]{Mor76} and by the monotonicity of the normal cone
we get
\begin{align}
  \int_{B \cap \cont(\C)} \de \D\!\ (\norm{y_1(\cdot) - y_2(\cdot)}{}^2) 
    & \le 2 \int_{B \cap \cont(\C)} \duality{y_1 - y_2}{\de \D\!\ (y_1 - y_2)}  \notag \\
    & \le 2 \int_{\cont(\C)} \duality{y_1 - y_2}{\de \D\!\ (y_1 - y_2)} \le 0, \notag
\end{align}
while if $t \in \discont(\C)$, from \eqref{jump cond} we infer that
\begin{align}
  & \D\!\ (\norm{y_1(\cdot) - y_2(\cdot)}{}^2)(\{t\}) \notag \\
    & =   \norm{y_1(t+) - y_2(t+)}{}^2 - \norm{y_1(t-) - y_2(t-)}{}^2 \notag \\
    & =   \norm{\Proj_{\C(t+)}(y_1(t)) - \Proj_{\C(t+)}(y_2(t))}{}^2 - \norm{y_1(t-) - y_2(t-)}{}^2 \notag \\ 
    & \le \norm{y_1(t) - y_2(t)}{}^2 - \norm{y_1(t-) - y_2(t-)}{}^2 \notag \\
    & = \norm{\Proj_{\C(t)}(y_1(t-)) - \Proj_{\C(t)}(y_2(t-))}{}^2 - \norm{y_1(t-) - y_2(t-)}{}^2 \notag \\
    & \le \norm{y_1(t-) - y_2(t-)}{}^2 - \norm{y_1(t-) - y_2(t-)}{}^2 = 0. \notag
\end{align}
Therefore for every $B \in \borel(\clint{0,T})$ we find
\begin{align}
  & \D\!\ (\norm{y_1(\cdot) - y_2(\cdot)}{}^2)(B) \notag \\
    & = \D\!\ (\norm{y_1(\cdot) - y_2(\cdot)}{}^2)(B \cap \cont(u)) + 
            \D\!\ (\norm{y_1(\cdot) - y_2(\cdot)}{}^2)(B \cap \discont(u)) \notag \\
    & = \int_{B \cap \cont(u)} \de \D\!\ (\norm{y_1(\cdot) - y_2(\cdot)}{}^2) + 
          \sum_{t \in B \cap \discont(u)} \D\!\ (\norm{y_1(\cdot) - y_2(\cdot)}{}^2)(\{t\}) \le 0 \notag
\end{align}
which implies that $t \longmapsto \norm{y_1(t-) - y_2(t-)}{}^2$ is nonincreasing and leads to the uniqueness of the solution since $y_i(t) = \Proj_{\C(t)}(y_i(t-))$ and $y_i(t+) = \Proj_{\C(t+)}(y_i(t-))$ for $i=1,2$. 
\end{proof}


\section{The 1-Lipschitz case}

Our proof of the main Theorem \ref{main thm} is based on the fact that problem \eqref{constr. arbBV sweep}--\eqref{i.c. arbBV sweep} is reduced to the case when $\C$ is $1$-Lipschitz continuous with respect to $\e$, i.e. 
\begin{equation}\label{e 1-Lip}
  \e(\C(t),\C(s)) \le s - t \qquad \forall t, s \in \ar\ :\ 0 \le t \le s.
\end{equation}
To be more precise the following theorem holds:

\begin{Thm}\label{Lip theorem}
Assume that $y_0 \in \H$ and that $\C : \clsxint{0,\infty} \function \Conv_\H$ is $1$-Lipschitz continuous with respect to $\e$, i.e. \eqref{e 1-Lip} holds. Then exists a unique function $y \in \Lip_\loc(\clsxint{0,\infty};\H)$ such that 
\begin{align}
  & y(t) \in \C(t), \label{constr. arbBV sweep-lip} \\
  & -y'(t) \in N_{\C(t)}(y(t)) \qquad \text{for $\leb^1$-a.e. $t \ge 0$}, \label{diff inclu-lip}\\
  & y(0) = \Proj_{\C(0)}(y_0). \label{i.c. arbBV sweep-lip}
\end{align}
We will call this unique solution $\Sw(\C,y_0)$ so that we define  
$\S$ $:$ $\Lip_\loc(\clsxint{0,\infty};\Conv_\H) \times \H \function \Lip_\loc(\clsxint{0,\infty};\H)$, the solution operator associating with every 
pair $(\C,y_0)$ the only $y$ satisfying \eqref{constr. arbBV sweep-r}--\eqref{i.c. arbBV sweep-r}.
\end{Thm}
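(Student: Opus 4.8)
The plan is to obtain uniqueness from the monotonicity of the normal cone and existence from Moreau's catching-up algorithm, passing to the limit through weak compactness together with a Minty-type inequality. For uniqueness, let $y_1,y_2$ be two solutions and fix a compact interval $\clint{0,T}$. Both are Lipschitz, so $t\mapsto\norm{y_1(t)-y_2(t)}{}^2$ is absolutely continuous with a.e.\ derivative $2\duality{y_1'(t)-y_2'(t)}{y_1(t)-y_2(t)}$. Since $-y_i'(t)\in N_{\C(t)}(y_i(t))$ and $x\mapsto N_{\C(t)}(x)$ is monotone, this derivative is nonpositive; as $y_1(0)=y_2(0)=\Proj_{\C(0)}(y_0)$, the function vanishes, whence $y_1=y_2$ on $\clint{0,T}$, and hence on $\clsxint{0,\infty}$.

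For existence I would fix $T>0$, set $t_i^n:=iT/n$, and define $y_0^n:=\Proj_{\C(0)}(y_0)$, $y_{i+1}^n:=\Proj_{\C(t_{i+1}^n)}(y_i^n)$, with piecewise affine interpolant $y^n$ and forward, respectively backward, piecewise constant interpolants $\hat y^n$ (value $y_{i+1}^n$) and $\check y^n$ (value $y_i^n$) on each $\opint{t_i^n,t_{i+1}^n}$. Because $y_i^n\in\C(t_i^n)$ and $\C$ is $1$-Lipschitz,
\[
  \norm{y_{i+1}^n-y_i^n}{}=\d(y_i^n,\C(t_{i+1}^n))\le\e(\C(t_i^n),\C(t_{i+1}^n))\le t_{i+1}^n-t_i^n=T/n,
\]
so $\norm{(y^n)'}{}\le1$ a.e.\ and the $y^n$ are equi-Lipschitz. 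Thus $((y^n)')$ is bounded in $\L^2(\opint{0,T};\H)$; extracting a subsequence $(y^n)'\rightharpoonup w$ and setting $y(t):=\Proj_{\C(0)}(y_0)+\int_0^t w\,\de s$ gives $y'=w$ and $y^n(t)\rightharpoonup y(t)$ for every $t$, hence also $\hat y^n(t),\check y^n(t)\rightharpoonup y(t)$. For the constraint note that for $t\in\opint{t_i^n,t_{i+1}^n}$ one has $\check y^n(t)=y_i^n\in\C(t_i^n)$ with $t_i^n\le t$, so $\d(\check y^n(t),\C(t))\le\e(\C(t_i^n),\C(t))\le T/n$; picking $p_n\in\C(t)$ with $\norm{\check y^n(t)-p_n}{}\le T/n$ gives $p_n\rightharpoonup y(t)$, and since $\C(t)$ is convex and closed, hence weakly closed, $y(t)\in\C(t)$.

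The variational characterization of the projection gives the discrete normality $y_i^n-y_{i+1}^n\in N_{\C(t_{i+1}^n)}(y_{i+1}^n)$. Given a measurable selection $\zeta(t)\in\C(t)$, for $t\in\opint{t_i^n,t_{i+1}^n}$ put $z:=\Proj_{\C(t_{i+1}^n)}(\zeta(t))$, so that $\norm{z-\zeta(t)}{}\le\e(\C(t),\C(t_{i+1}^n))\le T/n$ (here $t\le t_{i+1}^n$), whence $\duality{-(y^n)'(t)}{\zeta(t)-\hat y^n(t)}\le\duality{(y^n)'(t)}{z-\zeta(t)}\le T/n$. Integrating and letting $n\to\infty$, the only delicate term is $\int_0^T\duality{(y^n)'}{\hat y^n}\de t$, a product of two weakly convergent sequences; here the discrete energy identity
\[
  \int_0^T\duality{(y^n)'}{\hat y^n}\de t=\tfrac12\norm{y^n(T)}{}^2-\tfrac12\norm{y^n(0)}{}^2+\tfrac12\sum_i\norm{y_{i+1}^n-y_i^n}{}^2,
\]
together with $\sum_i\norm{y_{i+1}^n-y_i^n}{}^2\le T^2/n\to0$ and the weak lower semicontinuity of the norm, yields $\liminf_n\int_0^T\duality{(y^n)'}{\hat y^n}\de t\ge\int_0^T\duality{y'}{y}\de t$. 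Combining the two estimates gives $\int_0^T\duality{-y'}{\zeta-y}\de t\le0$ for every such selection $\zeta$; applying this to $\zeta=\indicator_E s+\indicator_{\clint{0,T}\setmeno E}\,y$ over measurable sets $E$ and over a Castaing representation $(s_k)$ of $\C(\cdot)$ localizes the inequality to $-y'(t)\in N_{\C(t)}(y(t))$ for a.e.\ $t$. Since $y(0)=\Proj_{\C(0)}(y_0)$ by construction, $y$ solves the problem on $\clint{0,T}$; uniqueness forces compatibility as $T\to\infty$, producing the required $y\in\Lip_\loc(\clsxint{0,\infty};\H)$.

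The crux, and the step I expect to be the main obstacle, is the passage to the limit in the inclusion: the normal cone is \emph{not} closed under weak convergence of both the states and the velocities, so the product term $\int\duality{(y^n)'}{\hat y^n}$ must be controlled through the energy identity and weak lower semicontinuity, which is the Minty--Brezis argument. A secondary technical point, peculiar to this setting, is the asymmetry of the excess $\e$: one is forced to test the inclusion at the \emph{forward} nodes $\C(t_{i+1}^n)$ but to verify the constraint $y(t)\in\C(t)$ at the \emph{backward} nodes $\C(t_i^n)$, since the $1$-Lipschitz bound \eqref{e 1-Lip} controls only $\e(\C(t),\C(s))$ for $t\le s$.
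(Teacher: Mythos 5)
Your argument is correct in substance and follows the same skeleton as the paper (uniqueness from monotonicity of the normal cone, existence from the catching-up scheme), but the passage to the limit is genuinely different. The paper works on the dyadic grids $t_j^n=j/2^n$ and uses Moreau's two-set projection inequality
\begin{equation}
  \norm{\Proj_{\A}(x)-\Proj_{\B}(y)}{}^2-\norm{x-y}{}^2\le 2\d(x,\A)\e(\B,\A)+2\d(y,\B)\e(\A,\B) \notag
\end{equation}
to compare the approximants at consecutive levels $n$ and $n+1$; this yields a summable Cauchy estimate, hence \emph{strong, locally uniform} convergence of the scheme, after which the limit in $\int_0^T\duality{y_n-z}{x_n'}\de t\le o(1)$ is immediate because it is a pairing of a strongly convergent sequence with a weakly convergent one. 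You instead keep only weak convergence and compensate with the discrete energy identity $\int_0^T\duality{(y^n)'}{\hat y^n}\de t=\tfrac12\norm{y^n(T)}{}^2-\tfrac12\norm{y^n(0)}{}^2+\tfrac12\sum_i\norm{y_{i+1}^n-y_i^n}{}^2$ plus weak lower semicontinuity of the norm — the Minty--Br\'ezis device. Your route buys independence from Moreau's refined inequality (you only use nonexpansivity and the variational characterization of the projection, and your handling of the asymmetry of $\e$ — forward nodes for the inclusion, backward nodes for the constraint — is exactly right); what it gives up is the strong uniform convergence of the catching-up iterates, which the paper gets for free and which is often useful downstream. Both proofs then reduce the pointwise inclusion to an integral inequality against arbitrary bounded measurable selections of $\C$, which is the paper's Corollary \ref{L:Lip-int. form.}.

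One point to repair: your final localization invokes a Castaing representation $(s_k)$ of $t\longmapsto\C(t)$, which requires separability of $\H$ (or at least of the values $\C(t)$); the paper assumes only that $\H$ is a real Hilbert space. The paper's Lemma \ref{L:BV-int. form.} avoids this by localizing at right Lebesgue points of $y'$ and testing with the single selection $\tau\longmapsto\Proj_{\C(\tau)}(\zeta_t)$ for a fixed $\zeta_t\in\C(t)$, which is right continuous by \cite[Proposition 2e]{Mor77} and hence bounded and measurable. Substituting that argument (or simply citing Corollary \ref{L:Lip-int. form.}) for your Castaing step closes the only gap; everything else in your proposal goes through as written.
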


This result was proved in \cite{Mor77} as a consequence of the more general formulation 
\eqref{constr. arbBV sweep-r}--\eqref{i.c. arbBV sweep-r}. Since we want to follow the opposite direction, for the sake of completeness and in order to be independent of the proofs of \cite{Mor77}, we provide here a direct proof.
As far as we know this proof has an element of novelty based on the fact that we exploit an integral formulation of 
\eqref{diff inclu-lip} in order to pass to the limit in the approximation procedure. Let us start by proving an integral formulation for the more general inclusion \eqref{diff inclu}.

\begin{Lem}\label{L:BV-int. form.}
Let us assume that $\mu : \borel(\clsxint{0,\infty}) \function \clint{0,\infty}$ is a Borel measure and that
$\C \in \BR^\r_\loc(\clsxint{0,\infty};\Conv_\H)$. If $v \in \L^1_\loc(\mu;\H)$, $y \in \BV_\loc^\r(\clsxint{0,\infty};\H)$ and $y(t) \in \C(t)$ for every 
$t \in \clsxint{0,\infty}$, then the following two conditions are equivalent.
\begin{itemize}
\item[(i)]
  $v(t) \in N_{\C(t)}(y(t))$ for $\mu$-a.e. in $\clsxint{0,T}$, for every $T > 0$.
\item[(ii)]
  $\displaystyle{\int_{\clsxint{0,T}} \duality{y(t) - z(t)}{v(t)} \de \mu(t) \le 0}$ for every $T > 0$ and every bounded 
  $\mu$-measurable function $z : \clsxint{0,T} \function \H$ such that $z(t) \in \C(t)$ for every $t \in \clsxint{0,T}$.
\end{itemize}
\end{Lem}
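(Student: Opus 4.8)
The plan is to fix $T>0$ and prove the equivalence of (i) and (ii) on $\clsxint{0,T}$, since both conditions are quantified over $T$. By the definition of the normal cone in \eqref{normal cone}, condition (i) is, for $\mu$-a.e.\ $t \in \clsxint{0,T}$, the requirement that $\duality{y(t)-w}{v(t)} \le 0$ for every $w \in \C(t)$; this is the sign that makes the integrand in (ii) nonpositive. For the implication (i)$\Rightarrow$(ii) I would fix an admissible $z$ and note that, since $y \in \BV_\loc(\clsxint{0,\infty};\H)$ is bounded on $\clsxint{0,T}$ and $z$ is bounded by hypothesis, the integrand $t \mapsto \duality{y(t)-z(t)}{v(t)}$ is $\mu$-measurable and dominated by $(\sup_{\clsxint{0,T}}\norm{y}{}+\sup_{\clsxint{0,T}}\norm{z}{})\,\norm{v(\cdot)}{}$, which is $\mu$-integrable because $v \in \L^1_\loc(\mu;\H)$; hence the integral in (ii) is well defined. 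Applying the pointwise inequality from (i) with $w=z(t)\in\C(t)$ shows the integrand is $\le 0$ for $\mu$-a.e.\ $t$, and integrating yields (ii). This direction is routine.

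The substance of the lemma is (ii)$\Rightarrow$(i), which I would prove by contradiction. Suppose (i) fails on $\clsxint{0,T}$, and set $h(t) := \sup_{w\in\C(t)}\duality{y(t)-w}{v(t)}$; since $y(t)\in\C(t)$ one always has $h(t)\ge 0$, and the failure of (i) means that $E_n := \{t\in\clsxint{0,T} : h(t) > 1/n\}$ has $\mu(E_n)>0$ for some $n\in\en$. The goal is to produce a bounded $\mu$-measurable selection $z$ with $z(t)\in\C(t)$ for all $t$, with $z(t)=y(t)$ off $E_n$, and with $\duality{y(t)-z(t)}{v(t)}>0$ on $E_n$: then $\int_{\clsxint{0,T}}\duality{y-z}{v}\de\mu = \int_{E_n}\duality{y-z}{v}\de\mu > 0$ (a $\mu$-measurable function that is strictly positive on a set of positive measure has strictly positive integral), contradicting (ii).

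The crux, and the step I expect to be the main obstacle, is the measurable selection. First I would reduce to a separable closed subspace $\H_0 \subseteq \H$ that essentially contains the ranges of $y$ and $v$; this is legitimate because $y$ is regulated (hence separably valued) and $v$ is $\mu$-Bochner integrable (hence essentially separably valued). Next I would verify that $t\mapsto\C(t)$ is a $\mu$-measurable closed-valued multifunction: here the hypothesis $\C\in\BR^\r_\loc(\clsxint{0,\infty};\Conv_\H)$ is used, since for each fixed $x$ the relations recalled in Section~\ref{S:Preliminaries} show that $t\mapsto\d(x,\C(t))$ is a \emph{regulated} function, in particular Borel measurable, which by the Castaing characterization gives measurability of the multifunction. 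Since $(t,w)\mapsto\duality{y(t)-w}{v(t)}$ is a normal integrand (measurable in $t$, continuous in $w$), the multifunction $t\mapsto\{w\in\C(t):\duality{y(t)-w}{v(t)}>1/n\}$ has nonempty values on $E_n$ and is measurable, so a measurable selection $w(\cdot)$ exists by the Kuratowski--Ryll-Nardzewski / Aumann selection theorem.

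Finally, to enforce boundedness without destroying the sign, I would set $z(t):=y(t)+\dfrac{w(t)-y(t)}{1+\norm{w(t)-y(t)}{}}$ on $E_n$ and $z(t):=y(t)$ elsewhere: convexity of $\C(t)$ together with the fact that the coefficient lies in $\opint{0,1}$ keeps $z(t)\in\C(t)$ and preserves $\duality{y(t)-z(t)}{v(t)}>0$, while $\norm{z(t)-y(t)}{}\le 1$ makes $z$ bounded on $\clsxint{0,T}$. This completes the contradiction. The delicate points to get right are the measurability of $t\mapsto\C(t)$ from the bounded-retraction hypothesis and the separability reduction needed to apply the selection theorem; the normalization by convexity is the device that lets us avoid any uniform lower bound on the integrand, so that mere strict positivity on $E_n$ suffices.
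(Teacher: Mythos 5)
Your reduction of (ii) to (i) by contradiction is a genuinely different route from the paper's, and its weak point is exactly where you anticipate: the measurable selection. The paper works in an arbitrary real Hilbert space $\H$, with no separability assumption, and your separability reduction only tames the ranges of $y$ and $v$ --- it does nothing for the values $\C(t)$, which form the range of the multifunction you want to select from. The Kuratowski--Ryll-Nardzewski and Aumann theorems require values in a Polish (or at least Suslin) space, and the Castaing characterization you invoke (measurability of $t\mapsto \d(x,\C(t))$ for each $x$ yields a Castaing representation) is an equivalence only for separable range; so as written the selection step, and with it the measurability of $h$ and of $E_n$, is not justified in the stated generality. There is a repair that stays close to your plan: instead of an abstract selection, use the canonical one $w(t):=\Proj_{\C(t)}\bigl(y(t)-v(t)\bigr)$. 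The variational inequality for the projection gives $\duality{y(t)-w(t)}{v(t)}\ge \norm{y(t)-w(t)}{}^{2}$, and $w(t)=y(t)$ precisely when (i) holds at $t$, so the integrand is strictly positive exactly on the bad set; measurability of $t\mapsto w(t)$ then follows from the nonexpansivity of $\Proj_{\K}$ in its argument together with the right continuity of $\tau\mapsto\Proj_{\C(\tau)}(\zeta)$ for fixed $\zeta$ when $\C\in\BR^{\r}_{\loc}$ (Moreau), approximating $y-v$ by step functions. Your normalization device to force boundedness while preserving strict positivity is correct and would then close the argument.

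For comparison, the paper avoids selections altogether: it localizes at the right $\mu$-Lebesgue points of $v$ (which exist $\mu$-a.e.\ because $v$ is Bochner integrable, hence essentially separably valued), tests (ii) with $z$ equal to $\tau\mapsto\Proj_{\C(\tau)}(\zeta_t)$ on $\clint{t,t+h}$ and to $y$ elsewhere --- admissible precisely because that projection curve is right continuous --- then divides by $\mu(\clint{t,t+h})$ and lets $h\searrow0$ to recover the pointwise inequality against an arbitrary $\zeta_t\in\C(t)$. This is where the right-continuity hypotheses on $\C$ and $y$ are genuinely used, whereas in your argument they enter only through measurability. Finally, note that the signs in the statement are not internally consistent with the paper's definition of $N_{\K}(x)$ (with that definition, (i) gives $\duality{z(t)-y(t)}{v(t)}\le0$ rather than $\duality{y(t)-z(t)}{v(t)}\le0$); you silently adopted the same orientation as the paper's own proof, so your argument is self-consistent, but the typo is worth flagging.
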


\begin{proof}
Let us start by assuming that (i) holds and let $z : \clsxint{0,T} \function \H$ be a bounded $\mu$-measurable function such that 
$z(t) \in \C(t)$ for every $t \in \clsxint{0,T}$. Then it follows that
\[
  \duality{y(t) - z(t)}{v(t)} \le 0 \qquad \text{for $\mu$-a.e. $t \in \clsxint{0,T}$},
\]
and integrating over $\clsxint{0,T}$ we infer condition (ii). Now assume that (ii) is satisfied and recall that if  
$f \in \L^1(\mu,\clsxint{0,T};\H)$ then there exists a $\mu$-zero measure set $Z$ such that $f(\clsxint{0,T} \setmeno Z)$ is separable (see, e.g., \cite[Property M11, p. 124]{Lan93}), therefore from (the proof) of 
\cite[Corollary 2.9.9., p. 156]{Fed69} it follows that 
\begin{equation}\label{Dl-Lebesgue points}
  \lim_{h \searrow 0} \frac{1}{\mu(\clint{t,t+h})} \int_{\clint{t,t+h}} \norm{f(\tau) - f(t)}{E} \de \mu(\tau)
  = 0 \qquad \text{for $\mu$-a.e. $t \in \clsxint{0,T}$}.
\end{equation}
The points $s$ satisfying \eqref{Dl-Lebesgue points} are called \emph{right $\mu$-Lebesgue points of $f$}. Let $L$ be the set of right $\mu$-Lebesgue points for $\tau \longmapsto \blu{v}(\tau)$, fix $t \in L$, and choose $\zeta_t \in \C(t)$ arbitrarily. Since $y \in \BV_\loc^\r(\clsxint{0,\infty};\H)$ we have that $t$ is a right $\mu$-Lebesgue point of $y$, therefore
\begin{align}
    &  \frac{1}{\mu(\clint{t,t+h})} \int_{\clint{t,t+h}} \left|\duality{y(\tau)}{\blu{v}(\tau)} - \duality{y(t)}{v(t)}\right| \de \mu(\tau)
      \notag \\
     & \le
      \frac{1}{\mu(\clint{t,t+h})} \int_{\clint{t,t+h}}
        \left(\norm{y}{\infty} \norm{v(\tau) - v(t)}{} + \norm{\blu{v}}{\infty} \norm{y(\tau) - y(t)}{}\right) \de \mu(\tau), \label{pto leb 1}
\end{align}
thus
\begin{equation}
\lim_{h \searrow 0} \frac{1}{\mu(\clint{t,t+h})} \int_{\clint{t,t+h}} \duality{y(\tau)}{v(\tau)}  \de \mu(\tau)
  = \duality{y(t)}{v(t)}, \label{pto leb 2}
\end{equation}
i.e. $t$ is a right $\mu$-Lebesgue point of $\tau \longmapsto \duality{y(\tau)}{v(\tau)}$.
If $\zeta : \clsxint{0,T} \function \H$ is defined by $\zeta(\tau) := \Proj_{\C(\tau)}(\zeta_t)$, $\tau \in \clsxint{0,T}$, we have that $\zeta$ is right continuous (see \cite[Proposition 2e, Remark 1]{Mor77}, therefore
$\zeta$ is bounded and measurable, $\zeta(t) = \zeta_t$, and arguing as before we see that $t$ is a right 
$\mu$-Lebesgue point of $\tau \longmapsto \duality{\zeta(\tau)}{v(\tau)}$. Since $\zeta(\tau) \in \C(\tau)$ for every 
$\tau \in \clsxint{0,T}$, the function
$z(\tau) := \indicator_{\clsxint{0,T} \cap \clint{t,t+h}}(\tau) \zeta(\tau) + 
\indicator_{\clsxint{0,T} \setmeno \clint{t,t+h}}(\tau) y(\tau)$, $\tau \in \clsxint{0,T}$, is well defined for every sufficiently small $h > 0$ and $z(\tau) \in \C(\tau)$ for every $\tau \in \clsxint{0,T}$, thus we can take $z$ in condition (ii) and we get
\[
  \int_{\clint{t,t+h}} \duality{y(\tau)}{v(\tau)}\de \mu(\tau) \le 
  \int_{\clint{t,t+h}} \duality{\zeta(\tau)}{v(\tau)} \de \mu(\tau).
\] 
Dividing this inequality by $\mu(\clint{t,t+h})$ and taking the limit as $h \searrow 0$ we get
$\duality{y(t) - \zeta_t}{v(t)} \le 0$. Therefore, as $\mu(L)$ = 0, we have proved that
\begin{equation}
  \duality{y(t) - \zeta}{v(t)} \le 0 \quad \forall \zeta \in \C(t), \quad \text{for $\mu$-a.e. $t \in \clsxint{0,T}$,} \notag
\end{equation}
i.e. condition (i) holds.
\end{proof}

As a corollary we get the desired integral formulation for \eqref{diff inclu-lip}.

\begin{Cor}\label{L:Lip-int. form.}
Let $\C : \clsxint{0,\infty} \function \Conv_\H$ be such that \eqref{e 1-Lip} holds. If $y \in \Lip_\loc(\clsxint{0,\infty};\H)$ and $y(t) \in \C(t)$ for every $t \in \clsxint{0,\infty}$, then the following two conditions are equivalent.
\begin{itemize}
\item[(i)]
  $-y'(t) \in N_{\C(t)}(y(t))$ for $\leb^1$-a.e. in $\clint{0,T}$, for every $T > 0$.
\item[(ii)]
  $\displaystyle{\int_0^T \duality{y(t) - z(t)}{y'(t)} \de t \le 0}$ for every $T > 0$ and every bounded 
  $\leb^1$-measurable function $z : \clint{0,T} \function \H$ such that $z(t) \in \C(t)$ for every $t \in \clint{0,T}$.
\end{itemize}
\end{Cor}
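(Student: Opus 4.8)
The plan is to read off Corollary \ref{L:Lip-int. form.} as the purely absolutely continuous instance of Lemma \ref{L:BV-int. form.}, obtained by the specialization $\mu := \leb^1$ and $v := -y'$. Nothing new has to be proved: the entire task is to check that, under the $1$-Lipschitz hypothesis \eqref{e 1-Lip}, all the structural assumptions of the lemma are in force, and that with these two choices its conditions (i) and (ii) become verbatim conditions (i) and (ii) of the corollary.

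First I would verify the hypotheses of Lemma \ref{L:BV-int. form.}. The measure $\mu := \leb^1$ is a positive Borel measure on $\clsxint{0,\infty}$. Since $\C$ satisfies \eqref{e 1-Lip}, on every compact subinterval one has $\ret(\C,\clint{0,T}) \le T < \infty$, so $\C \in \BR_\loc(\clsxint{0,\infty};\Conv_\H)$; moreover \eqref{e 1-Lip} gives $\e(\C(t),\C(s)) \le s-t$ for $s > t$, hence $\e(\C(t),\C(t+)) = \lim_{s \to t+}\e(\C(t),\C(s)) = 0$ for every $t$, which is exactly the membership $\C \in \BR^\r_\loc(\clsxint{0,\infty};\Conv_\H)$ required by the lemma. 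On the side of the trajectory, $y \in \Lip_\loc(\clsxint{0,\infty};\H)$ is continuous, so $y(t) = y(t+)$ for all $t$, and of locally bounded variation, whence $y \in \BV^\r_\loc(\clsxint{0,\infty};\H)$; being locally absolutely continuous it is $\leb^1$-a.e. differentiable with a locally bounded, hence locally $\leb^1$-integrable, derivative, so that $v := -y' \in \L^1_\loc(\leb^1;\H)$ and $\D y = y'\leb^1$. The remaining requirement $y(t) \in \C(t)$ for all $t$ is part of the assumptions. Thus Lemma \ref{L:BV-int. form.} applies to the triple $(\mu,\C,y)$ with $v = -y'$.

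Finally I would transcribe the conclusion. With $\mu = \leb^1$ and $v = -y'$, condition (i) of the lemma is $-y'(t) \in N_{\C(t)}(y(t))$ for $\mu$-a.e. $t$, which is condition (i) of the corollary (the null set $\{T\}$ making ``$\mu$-a.e. in $\clsxint{0,T}$'' and ``$\leb^1$-a.e. in $\clint{0,T}$'' interchangeable); and condition (ii) of the lemma becomes $\int_0^T \duality{y(t)-z(t)}{y'(t)} \de t \le 0$ for every bounded $\leb^1$-measurable $z : \clint{0,T} \function \H$ with $z(t) \in \C(t)$ for all $t$, which is condition (ii) of the corollary. Hence the equivalence (i) $\Leftrightarrow$ (ii) of Lemma \ref{L:BV-int. form.} specializes to the one asserted for the $1$-Lipschitz $\C$. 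The only genuinely nontrivial point, and the one I would flag explicitly, is the verification $\C \in \BR^\r_\loc$ from $1$-Lipschitzness together with the identification $\D y = y'\leb^1$; once these are in place the right $\mu$-Lebesgue-point machinery internal to the lemma reduces, for $\mu = \leb^1$, to ordinary right Lebesgue points, so that no argument beyond the quoted lemma is needed.
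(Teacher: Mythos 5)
Your proposal is correct and coincides with the paper's own proof, which consists of the single line ``apply Lemma \ref{L:BV-int. form.} with $\mu = \leb^1$ and $v = -y'$''; the hypothesis checks you add ($\C \in \BR^\r_\loc$ from \eqref{e 1-Lip}, $y \in \BV^\r_\loc$ and $\D y = y'\leb^1$ from local Lipschitz continuity) are exactly the routine verifications the paper leaves implicit. The only point worth noting is that the statement of Lemma \ref{L:BV-int. form.} carries a sign slip (its condition (i) should read $-v(t) \in N_{\C(t)}(y(t))$ to be consistent with its condition (ii) and with its own proof), and your transcription of condition (ii) silently uses the corrected convention, just as the paper does.
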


\begin{proof}
We can apply Lemma \eqref{L:BV-int. form.} with $\mu = \leb^1$ and $v = -y'$.
\end{proof}


Now we provide the proof of Theorem \ref{Lip theorem} following an implicit discretization scheme. The proof is obviously inspired by the paper \cite{Mor77} and a crucial fact for getting proper a priori estimates is the 
following inequality holding for $\A, \B \in \Conv_\H$ and $x, y \in \H$ (see \cite[Lemma 1(2a)]{Mor77}):
\begin{equation}\label{estimate for projections}
  \norm{\Proj_{\A}(x) - \Proj_{\B}(y)}{}^2 - \norm{x-y}{}^2 \le
  2\d(x,\A)\e(\B,\A) + 2 \d(y,\B)\e(\A,\B)
\end{equation}

\begin{proof}[Proof of Theorem \ref{Lip theorem}]
First of all observe that the uniqueness of solutions can be inferred exactly as in Lemma \ref{uniqueness},
since \eqref{constr. arbBV sweep}--\eqref{i.c. arbBV sweep} hold with $\mu = \leb^1$ and $v = y'$. Concerning existence, for every $n \in \en \cup \{0\}$, set 
\begin{equation}\label{t_j^n = j/2^n}
  t_j^{n} := \frac{j}{2^n}, \quad j \in \en \cup \{0\}
\end{equation}
and we define recursively the sequence $(y_j^n)_{j=0}^\infty$ by the three following conditions:
\begin{align}
   & y_0^n := \Proj_{\C(0)}(y_0), \vspace{1ex}  \label{implicit scheme for Lip-sweeping processes1} \\
   & y_j^n \in \C(t_j^n), \qquad j \in\ \en.\vspace{1.1ex} \label{implicit scheme for Lip-sweeping processes2} \\
   & \dfrac{y_j^n -y_{j-1}^n}{t_j^n - t_{j-1}^n} \in -N_{\C(t_j^n)}(y_j^n), \label{implicit scheme for Lip-sweeping processes3}
      \qquad j \in\ \en.
\end{align}
Observe that \eqref{implicit scheme for Lip-sweeping processes1}-\eqref{implicit scheme for Lip-sweeping processes3} is an implicit time discretization scheme for \eqref{diff inclu-lip}--\eqref{i.c. arbBV sweep-lip}.
From the fact that $N_{\C(t_j^n)}(y_j^n)$ is a cone we infer that \eqref{implicit scheme for Lip-sweeping processes3} 
is equivalent to $y_{j-1}^n - y_j^n \in N_{\C(t_j^n)}(y_j^n)$, which can be rewritten as
\begin{equation}\label{y j n = ..}
  y_j^n = \Proj_{\C(t_j^n)}(y_{j-1}^n) = \Proj_{\C(\frac{j}{2^n})}(y_{j-1}^n) \qquad \forall j \in \en.
\end{equation}
Therefore using the $1$-Lipschitzianity of $\C$ with respect to $\e$, we get
\begin{align}\label{estimate a}
\norm{y_{1}^{n+1} - y_{0}^{n}}{}^2 
  & =   \norm{\Proj_{\C(t_1^{n+1})}(y_{0}^n) - y_{0}^n}{}^2 = \d(y_0^n,\C(t_1^{n+1}))^2\notag \\
  & \le   \e(\C(0),\C(t_1^{n+1}))^2 \le (t_1^{n+1})^2 = \frac{1}{2^{2n+2}}
\end{align}
and more generally, using also \eqref{estimate for projections}, for every $j \ge 1$ we have
\begin{align}\label{estimate b}
  \norm{y_{2j+1}^{n+1} - y_j^n}{}^2 
    &  =   \norm{\Proj_{\C(t_{2j+1}^{n+1})}(y_{2j}^{n+1}) - y_{j}^n}{}^2 \notag \\
    & \le \norm{y_{2j}^{n+1} - y_{j}^n}{}^2 + 
             2\d\left(y_{2j}^{n+1}, \C(t_{2j+1}^{n+1})\right)\d\left(y_j^n, \C(t_{2j+1}^{n+1})\right) \notag \\
    & \le \norm{y_{2j}^{n+1} - y_j^n}{}^2 + 
             2\e\left(\C(t_{2j}^{n+1}), \C(t_{2j+1}^{n+1})\right)\e\left(\C(t_{j}^{n}), \C(t_{2j+1}^{n+1})\right) \notag \\  
    & =  \norm{y_{2j}^{n+1} - y_j^n}{}^2 + 
             2\e\left(\C\left(\frac{2j}{2^{n+1}}\right), \C\left(\frac{2j+1}{2^{n+1}}\right)\right)
               \e\left(\C\left(\frac{2j}{2^{n+1}}\right), \C\left(\frac{2j+1}{2^{n+1}}\right)\right) \notag \\    
    & \le \norm{y_{2j}^{n+1} - y_j^n}{}^2 + 
             2\left(\frac{2j+1}{2^{n+1}}-\frac{2j}{2^{n+1}}\right)  \left(\frac{2j+1}{2^{n+1}}-\frac{2j+1}{2^{n+1}}\right)   \notag \\ 
    & = \norm{y_{2j}^{n+1} - y_j^n}{}^2 + 
             2\frac{1}{2^{n+1}} \frac{1}{2^{n+1}}  
    = \norm{y_{2j}^{n+1} - y_j^n}{}^2 + 
             \frac{2}{2^{2n+2}}.                         
\end{align}
Moreover for every $j \in \en$ the nonexpansivity of the projection yields
\begin{align}\label{estimate c}
\norm{y_{2j}^{n+1} - y_j^n}{}^2 
    & =   \norm{\Proj_{\C(\frac{j}{2^{n}})}(y_{2j-1}^{n+1}) - \Proj_{\C(\frac{j}{2^n})}(y_{j-1}^{n})}{}^2 \notag \\
    & \le \norm{y_{2j-1}^{n+1} - y_{j-1}^n}{}^2 \notag \\
    & =   \norm{y_{2(j-1)+1}^{n+1} - y_{j-1}^n}{}^2,        
\end{align}
thus, \eqref{estimate a}, \eqref{estimate b}, \eqref{estimate c}, and a recursive argument yield for $j \ge 1$
\begin{align}\label{estimate c}
  \norm{y_{2j+1}^{n+1} - y_j^n}{}^2  
   & \le \norm{y_{1}^{n+1} - y_{0}^n}{}^2 +  \frac{2j}{2^{2n+2}}  \notag \\
   & \le \frac{1}{2^{2n+2}} + \frac{2j}{2^{2n+2}}  = \frac{1+2j}{2^{2n+2}}
\end{align}
and for $j \ge 1$
\begin{align}\label{estimate d}
\norm{y_{2j}^{n+1} - y_j^n}{}^2 
    &  \le     \norm{y_{2(j-1)+1}^{n+1} - y_{j-1}^n}{}^2 \notag \\
    & \le \frac{1+2(j-1)}{2^{2n+2}} = \frac{2j-1}{2^{2n+2}}.     
\end{align}
We now define the step function $y_n : \clsxint{0,\infty} \function \H$ by
\begin{equation}
  y_n(t) := y_{j}^n \qquad \text{if $t \in \clsxint{t_{j}^{n}, t_{j+1}^{n}}$}, \quad j \in \en \cup \{0\}.
\end{equation}
and we are going to prove that the sequence $(y_n)$ is locally uniformly Cauchy. To this aim we fix $T > 0$, 
$t \in \clint{0,T}$, and $n \in \en$. If $k, K \in \en \cup \{0\}$ are the unique integers such that
\begin{equation}\label{k and K}
\frac{k}{2^n} = t_k^n \le t < t_{k+1}^n =\frac{k+1}{2^n}, \qquad
\frac{K}{2^n}  \le T < \frac{K+1}{2^n},
\end{equation}
we have two possibilities: either
\begin{equation}\label{Cauchy estimate 1st case}
  t_k^n = \frac{k}{2^n} = \frac{2k}{2^{n+1}} = t_{2k}^{n+1}  \le t < \frac{2k+1}{2^{n+1}} = t_{2k+1}^{n+1}
\end{equation}
or  
\begin{equation}\label{Cauchy estimate 2nd case}
  \frac{2k+1}{2^{n+1}} = t_{2k+1}^{n+1} \le t < t_{2k+2}^{n+1} = \frac{2k+2}{2^{n+1}} = \frac{k+1}{2^n}.
\end{equation}
In the first case \eqref{Cauchy estimate 1st case}, from \eqref{estimate d} we get that
\begin{equation}
  \norm{y_{n+1}(t) - y_n(t)}{}^2 
     =   \norm{y_{2k}^{n+1} - y_k^n}{}^2  
     \le \frac{2k-1}{2^{2n+2}},
 \end{equation}
in the second case \eqref{Cauchy estimate 2nd case}, from \eqref{estimate c} we infer that
\begin{equation}
  \norm{y_{n+1}(t) - y_n(t)}{}^2 
     =   \norm{y_{2k+1}^{n+1} - y_k^n}{}^2 \le \frac{1+2k}{2^{2n+2}}
 \end{equation}
Therefore in every case thanks to \eqref{k and K} we find that
\begin{equation}
\norm{y_{n+1}(t) - y_n(t)}{}^2  \le \frac{1+2k}{2^{2n+2}} \le \frac{1+2K}{2^{2n+2}} \le \frac{1+2^{n+1}T}{2^{2n+2}}
\end{equation}
thus
\begin{equation}
  \sum_{n=0}^\infty \norm{y_{n+1}(t) - y_n(t)}{} \le \sum_{n=0}^\infty \frac{(1+2^{n+1}T)^{1/2}}{2^{n+1}}
   < \infty,
\end{equation}
and we have that $(y_n)$ is uniformly Cauchy on $\clint{0,T}$. It follows that there exists a right continuous function
$y : \clsxint{0,\infty} \function \H$ such that 
\begin{equation}
y_n \to y \qquad \text{uniformly on $\clint{0,T}, \quad \forall T > 0$}.
\end{equation} 
In particular it follows, by the closedness of $\C(t)$, that 
\begin{equation}
  y(t) \in \C(t) \qquad \forall t \ge 0.
\end{equation}
Now take $t, s \in \ar$ with $0 \le t < s$. Then there exist
$j_n, k_n \in \en$ such that $j_n \le k_n$ and
\begin{equation}
  \frac{j_n}{2^n} \le t < \frac{j_n+1}{2^n}, \qquad \frac{k_n}{2^n} \le s < \frac{k_n+1}{2^n},
\end{equation}
and we have
\begin{align}
   \norm{y_n(t) - y_n(s)}{}  
  & = \norm{y_{j_n}^n - y_{k_n}^n}{} \notag \\
  & \le \norm{y_{j_n}^n - y_{j_n+1}^n}{} + \cdots + \norm{y_{k_n - 1}^n - y_{k_n}^n}{} \notag \\
  & \le \norm{y_{j_n}^n - \Proj_{\C(t_{j_n+1}^n)}(y_{j_n}^n)}{} + \cdots + 
          \norm{y_{k_n - 1}^n - \Proj_{\C(t_{k_n}^n)}(y_{k_n-1}^n)}{} \notag \\ 
  & = \d(y_{j_n}^n,\C(t_{j_n+1}^n)) + \cdots + 
          \d(y_{k_n - 1}^n , \C(t_{k_n}^n)) \notag \\      
  & \le \e(\C(t_{j_n}^n),\C(t_{j_n+1}^n)) + \cdots + 
          \e(\C(t_{k_n - 1}^n) , \C(t_{k_n}^n)) \notag \\          
  & \le (t_{j_n+1}^n - t_{j_n}^n) + (t_{j_n+2}^n - t_{j_n+1}^n) + \cdots + (t_{k_n}^n - t_{k_n - 1}^n) \notag \\               
  & =   \frac{1}{n} + t_{k_n}^n - t_{j_n+1}^n \notag \\
  & \le  \frac{1}{n}+ s - t \notag
\end{align}
thus
\begin{equation}
  \norm{y(t) - y(s)}{} 
     = \lim_{n \to \infty} \norm{y_n(t) - y_n(s)}{} 
   \le   \lim_{n \to \infty} \frac{1}{n} + s - t = |t-s| \notag
\end{equation}
and we have found that $y$ is a $1$-Lipschitz continuous function. Now let $x_n$ be the piecewise affine interpolant of $y_n$:
\begin{equation}
  x_n(t) := y_{j-1}^n + \frac{t-t^n_{j-1}}{t_j^n - t_{j-1}^n}(y_j^n - y_{j-1}^n), 
  \qquad \text{if $t \in \clsxint{t_{j-1}^n, t_j^n}$}.
\end{equation}
It is immediately seen that
\begin{equation}
  x_n \to y \qquad \text{uniformly on $\clint{0,T}$},
\end{equation}
moreover we have 
\begin{equation}
  x_n'(t) = \frac{y_j^n - y_{j-1}^n}{t_j^n - t_{j-1}^n} \in -N_{C(t_j^n)(y_j^n)} \qquad \forall t \in \opint{t_{j-1}^n, t_j^n}
\end{equation}
and
\begin{equation}
  \norm{x_n'(t)}{} = \frac{\norm{y_j^n - y_{j-1}^n}{}}{t_j^n - t_{j-1}^n} \le 1 \qquad \forall t \neq t_j^n,
\end{equation}
therefore $\norm{x_n'}{\L^2(\clint{0,T};\H)}$ is bounded and, at least for a subsequence which we do not relabel, we have that 
\begin{equation}
  x_n' \to y' \qquad \text{weakly in $\L^2(\clint{0,T};\H)$}.
\end{equation}
Thus, if $z : \clint{0,T} \function \H$ is a bounded measurable function such that $z(t) \in \C(t)$ for every $t \in \clint{0,T}$,
we have that
\begin{align}
   \int_0^T \duality{y(t) - z(t)}{y'(t)} \de t 
     & = \lim_{n \to \infty} \int_0^T \duality{y_n(t) - z(t)}{x_n'(t)} \de t \le 0,
\end{align} 
hence thanks to the integral characterization of Corollary \ref{L:Lip-int. form.} we have proved the theorem. 
\end{proof}


\section{Geodesics for the retraction}

In this section we introduce the class of geodesics with respect to the excess $\e$ which allow us to reduce the $\BR$
sweeping processes to the Lipschitz continuous case. 

\begin{Def}\label{D:F}
Assume that $\A, \B \in \Conv_\H$ and set $\rho := \e(\A,\B)$. 
We define the curve $\F_{\A,\B} : \clint{0,1} \function \Conv_\H$ by 
\begin{equation}\label{catching-up geodesic-2}
  \F_{(\A,\B)}(t) := 
  \begin{cases}
    \A & \text{if $t=0$} \\
    \B + (1-t)D_\rho = \B + D_{(1-t)\rho} & \text{if $0 < t \le 1$}
  \end{cases}
\end{equation}
\end{Def}

\begin{Prop}
If $\A, \B \in \Conv_\H$ and $\F_{(\A,\B)} : \clint{0,1} \function \Conv_\H$ is defined as in Definition \ref{D:F} we have
\begin{equation}\label{F geodesic}
  \e(\F(s),\F(t)) = (t-s)\e(\A,\B) \qquad \forall s, t \in \clint{0,1},\ s < t,
\end{equation}
in particular $\Lipcost(\F) = (t-s)$ and we can call $\F$ a $\e$-geodesic connecting $\A$ to $\B$.
\end{Prop}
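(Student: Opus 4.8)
The plan is to reduce the whole statement to a computation of excesses between ball-enlargements of $\B$, handling separately the interior case $0<s<t\le 1$ and the boundary case $s=0$. Throughout I would use two elementary facts about the enlargement $\B+D_r$: the \emph{tube identity} $\B+D_r=\{x\in\H : \d(x,\B)\le r\}$ (the $\subseteq$-inclusion is immediate from the definition of $D_r$, and the $\supseteq$-inclusion follows by writing $x=\Proj_\B(x)+(x-\Proj_\B(x))$ with $\norm{x-\Proj_\B(x)}{}=\d(x,\B)\le r$), and its consequence $\d(x,\B+D_r)=\max\{0,\d(x,\B)-r\}$. I would also dispose at the outset of the degenerate case $\rho=\e(\A,\B)=0$ (which in particular subsumes $\B=\H$, forcing $\rho=0$): then $\A\subseteq\B$ and $\F(t)=\B$ for every $t\in\opint{0,1}$, so $\e(\F(s),\F(t))=0=(t-s)\rho$ holds trivially, and I may henceforth assume $\rho>0$ and hence $\B\neq\H$.

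For the interior case I would set $a:=(1-s)\rho\ge b:=(1-t)\rho\ge 0$, so that $\F(s)=\B+D_a$ and $\F(t)=\B+D_b$, and prove $\e(\B+D_a,\B+D_b)=a-b$. Combining the sup-definition of the excess with the tube identity gives $\e(\B+D_a,\B+D_b)=\sup_{x\in\B+D_a}\max\{0,\d(x,\B)-b\}$, so by monotonicity of $r\mapsto\max\{0,r\}$ the claim comes down to $\sup_{x\in\B+D_a}\d(x,\B)=a$. The upper bound is again the tube identity; for the lower bound I would exhibit a point at distance exactly $a$ from $\B$: choosing $q\notin\B$, putting $p:=\Proj_\B(q)$ and $v:=(q-p)/\norm{q-p}{}$, the point $x:=p+av$ satisfies $\Proj_\B(x)=p$, since $\duality{x-p}{w-p}=\tfrac{a}{\norm{q-p}{}}\duality{q-p}{w-p}\le 0$ for every $w\in\B$ by the variational characterization of the projection $p=\Proj_\B(q)$; hence $\d(x,\B)=\norm{x-p}{}=a$ and $x\in\B+D_a$. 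This delivers $\e(\F(s),\F(t))=a-b=(t-s)\rho$.

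For the boundary case $s=0$ the only difference is that $\F(0)=\A$ is in general a proper subset of $\B+D_\rho$, so one cannot reuse the previous formula verbatim. Instead I would compute directly $\e(\A,\B+D_{(1-t)\rho})=\sup_{\eta\in\A}\max\{0,\d(\eta,\B)-(1-t)\rho\}$ and invoke $\sup_{\eta\in\A}\d(\eta,\B)=\e(\A,\B)=\rho$. Since the resulting supremand has supremum $\rho-(1-t)\rho=t\rho\ge 0$, the continuity and monotonicity of the positive part let me interchange it with the supremum, giving $\e(\F(0),\F(t))=\max\{0,t\rho\}=t\rho=(t-0)\rho$, as required.

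The only genuinely nontrivial point is the \emph{sharpness} in the interior case, i.e. the inequality $\sup_{x\in\B+D_a}\d(x,\B)\ge a$. The inclusion $\B+D_a\subseteq\{x:\d(x,\B)\le a\}$ by itself yields only $\e(\B+D_a,\B+D_b)\le a-b$, and nothing excludes a strictly smaller value without producing an actual point of $\B+D_a$ sitting at distance exactly $a$ from $\B$; the projection-ray construction above is precisely what supplies such a point. Everything else is a routine assembly, after which the remaining Lipschitz-constant (and hence $\e$-geodesic) assertion of the proposition reads off immediately from \eqref{F geodesic}.
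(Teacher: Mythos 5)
Your argument is correct, but the route you take for the sharpness part is genuinely different from the paper's. The paper first establishes the two upper bounds $\e(\F(0),\F(t))\le t\rho$ (from $\A\subseteq\B+D_\rho=\B+D_{(1-t)\rho}+D_{t\rho}$) and $\e(\F(s),\F(t))\le(t-s)\rho$ for $0<s\le t$ (from $\F(s)=\F(t)+D_{(t-s)\rho}$), and then obtains the matching lower bound purely from the triangle inequality for the excess: $\e(\A,\B)\le\e(\A,\F(s))+\e(\F(s),\F(t))+\e(\F(t),\B)\le s\,\e(\A,\B)+\e(\F(s),\F(t))+(1-t)\e(\A,\B)$, which squeezes out $\e(\F(s),\F(t))\ge(t-s)\e(\A,\B)$ with no need to exhibit any extremal point. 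You instead compute the excess exactly, via the tube identity $\B+D_r=\{x:\d(x,\B)\le r\}$, the formula $\d(x,\B+D_r)=\max\{0,\d(x,\B)-r\}$, the interchange of $\sup$ with the positive part, and an explicit projection-ray point $x=p+av$ realizing $\d(x,\B)=a$ (correctly guarded by the reduction to $\rho>0$, hence $\B\neq\H$). Both proofs are sound; the paper's is shorter and exploits only the two endpoint bounds plus the $\e$-triangle inequality, while yours is more computational but yields the stronger standalone identity $\e(\B+D_a,\B+D_b)=a-b$ for $a\ge b$ and makes the extremal configuration visible. One small economy you could have borrowed from the paper: once the upper bounds on all three pieces $\e(\A,\F(s))$, $\e(\F(s),\F(t))$, $\e(\F(t),\B)$ are known, the lower bound is free, so the explicit point construction (the part you flag as the only nontrivial step) can be dispensed with entirely.
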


\begin{proof}
For every $t > 0$ we have $\F(0) = \A \subseteq \B + D_{\rho} = \A + \B_{(1-t)\rho} + \B_{t\rho}$ thus
$\e(\F(0),\F(t)) \le t\rho$. If $0 < s \le t$ we have
\begin{align}
  \F(s) 
    & = \B + D_{(1-s)\rho} \notag \\
    & = \B + D_{(1- t)\rho} + D_{(t-s)\rho} \notag \\   
    & = \F(t) + D_{(t-s)\rho}.
\end{align}
Therefore $\e(\F(s),\F(t)) \le (t - s)\rho = (t-s)\e(\A,\B)$. On the other hand we have, for instance,
$\e(\A,\B) \le \e(\A,\F(s)) + \e(\F(s),\F(t)) + \e(\F(t),\B) \le s\e(\A,\B) + \e(\F(s),\F(t)) + (1-t)\e(\A,\B)$, hence
$(t-s)\e(\A,\B) \le \e(\F(s),\F(t))$ and \eqref{F geodesic} is proved.
\end{proof}

The next Lemma shows that the solutions of the sweeping processes driven by $\F_{(\A,\B)}$ are always straight
line segments connecting the initial datum $y_0$ to its projection to $\B$.

\begin{Lem}\label{L:particular sweeping process}
Let $\A, \B \in \Conv_\H$ be such that $\rho := \e_\hausd(\A,\B) < \infty$ and let 
$\F_{(\A, \B)} = \F : \clint{0,1} \function \Conv_\H$ be defined by \eqref{catching-up geodesic-2}. If $y_0 \in \A$, then let $t_0 \in \clint{0,1}$ be the unique number such that  
\begin{equation}\label{t_0 s.t. d(y_0,B) = (1-t_0)delta}
  \norm{y_0 - \Proj_\B(y_0)}{} = (1-t_0)\rho
\end{equation}
and define $y \in \Lip(\clint{0,1};\H)$ by
\begin{equation}\label{solution for catching-up geodesic-2}
  y(t) :=
  \begin{cases}
    y_0 & \text{if $t \in \clsxint{0,t_0}$} \\
    y_0 + \dfrac{t-t_0}{1-t_0}(\Proj_{\B}(y_0) - y_0) & \text{if $t_0 \neq 1$, $t \in \clsxint{t_0,1}$} \\
    \Proj_\B(y_0) & \text{if $t = 1$}
  \end{cases}.
\end{equation}
Then
\begin{alignat}{3} 
  & y(t) \in \F_{(\A, \B)} (t) & \qquad & \forall t \in \clint{0,1}, 
     \label{y in G - Lip} \\
  & y'(t) + N_{\F_{(\A, \B)} (t)}(y(t)) \ni 0 & \qquad &  \text{for $\leb^{1}$-a.e. $t \in \clint{0,1}$}, 
      \label{diff. incl. G - Lip} \\
  & y(0) = \Proj_{\F_{(\A, \B)}(0)}(y_{0}) = y_0,
      \label{in. cond. G - Lip}
\end{alignat}
i.e. $y$ is the unique solution of the sweeping process driven by $\F_{(\A,\B)}$ with initial condition $y_0 \in \A$. 
\end{Lem}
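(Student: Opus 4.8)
The plan is to verify directly that the explicitly defined curve $y$ satisfies the three conditions \eqref{y in G - Lip}--\eqref{in. cond. G - Lip}, and then to invoke the monotonicity of the normal cone for uniqueness. Throughout set $p := \Proj_\B(y_0)$; since $y_0 \in \A$ we have $\d(y_0,\B) = \norm{y_0 - p}{} \le \e(\A,\B) = \rho$, so that $t_0 = 1 - \norm{y_0-p}{}/\rho \in \clint{0,1}$ is well defined (the degenerate case $\rho = 0$ forces $\A \subseteq \B$, whence $y \equiv y_0$ solves the problem trivially, so we may assume $\rho > 0$).

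The geometric core is the observation that every point of the segment $\clint{y_0,p}$ projects onto $\B$ at $p$: for $x = (1-s)y_0 + sp$ with $s \in \clint{0,1}$ one has $x - p = (1-s)(y_0 - p)$, so multiplying the variational inequality $\duality{y_0 - p}{v - p} \le 0$ (valid for all $v \in \B$) by $1-s \ge 0$ shows $\Proj_\B(x) = p$ and hence $\d(x,\B) = \norm{x-p}{} = (1-s)\norm{y_0-p}{}$. First I would use this to check the constraint \eqref{y in G - Lip}: for $t \in \opint{0,t_0}$ the point $y(t) = y_0$ satisfies $\d(y_0,\B) = (1-t_0)\rho < (1-t)\rho$, while for $t \in \clsxint{t_0,1}$ a short computation gives $1 - s = (1-t)/(1-t_0)$, whence $\d(y(t),\B) = (1-t)\rho$; in either case $y(t) \in \B + D_{(1-t)\rho} = \F(t)$, and the two endpoints $t=0,1$ are immediate. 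The initial condition \eqref{in. cond. G - Lip} is trivial, since $y_0 \in \A$ gives $\Proj_\A(y_0) = y_0$.

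For the differential inclusion \eqref{diff. incl. G - Lip} I would split into the two relevant intervals. On $\opint{0,t_0}$ we have $y' \equiv 0 \in -N_{\F(t)}(y(t))$, because $0$ belongs to every normal cone. On $\opint{t_0,1}$ one computes $-y'(t) = (y_0-p)/(1-t_0) = (y(t)-p)/(1-t)$, a strictly positive multiple of $y(t)-p$, so it suffices to prove the inclusion $\ar_{\ge 0}\,(x - \Proj_\B(x)) \subseteq N_{\B+D_r}(x)$ whenever $\d(x,\B) = r > 0$. This is the only nontrivial point, and I expect it to be the main obstacle; I would establish it by a direct estimate: writing an arbitrary $v \in \B + D_r$ as $v = b + d$ with $b \in \B$ and $\norm{d}{} \le r$, and using $\duality{b - p}{x-p} \le 0$ together with $\duality{d}{x-p} \le \norm{d}{}\,\norm{x-p}{} \le r^2 = \norm{x-p}{}^2$, one obtains $\duality{v - x}{x - p} \le 0$, i.e. $x - p \in N_{\B+D_r}(x)$, and the cone property finishes it. Applying this with $x = y(t)$, $r = (1-t)\rho$ and $\Proj_\B(x) = p$ yields \eqref{diff. incl. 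G - Lip} for $\leb^1$-a.e. $t$.

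Finally, uniqueness follows from the monotonicity of $x \longmapsto N_{\K}(x)$ exactly as in the proof of Lemma \ref{uniqueness} (or of Theorem \ref{Lip theorem}): if $y_1,y_2$ were two solutions, monotonicity gives $\frac{\de}{\de t}\norm{y_1-y_2}{}^2 = 2\duality{y_1-y_2}{y_1'-y_2'} \le 0$, so $\norm{y_1-y_2}{}^2$ is nonincreasing and vanishes identically because $y_1(0)=y_2(0)=y_0$. Since this argument is insensitive to the size of the Lipschitz constant of $\F$, no rescaling to the normalization \eqref{e 1-Lip} is needed.
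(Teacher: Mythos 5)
Your proposal is correct, and it follows the same overall strategy as the paper: verify directly that the explicit curve \eqref{solution for catching-up geodesic-2} satisfies the constraint, the initial condition, and the differential inclusion, the latter being trivial on $\opint{0,t_0}$ (where $y'=0$) and reducing on $\opint{t_0,1}$ to the fact that $-y'(t)$ is a positive multiple of $y(t)-\Proj_\B(y_0)$. The difference is one of self-containedness. The paper imports its two key geometric ingredients from \cite[Lemma 4.1-(ii),(iii)]{Rec16a}: the characterization of when $y_0$ lies on $\partial(\B+D_{(1-t)\rho})$ (identifying $t_0$ as the first contact time) and the fact that $x-\Proj_\B(x)$ generates a ray of $N_{\B+D_r}(x)$ when $\d(x,\B)=r$. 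You prove elementary substitutes for both: the observation that every point $x$ of the segment $\clint{y_0,p}$ has $\Proj_\B(x)=p$ (obtained by scaling the variational inequality for $p=\Proj_\B(y_0)$ by $1-s\ge 0$), which immediately gives $\d(y(t),\B)=(1-t)\rho$ on $\clint{t_0,1}$, and the direct estimate $\duality{v-x}{x-p}\le 0$ for $v=b+d\in\B+D_r$ via the variational inequality plus Cauchy--Schwarz. Both computations are correct, so your argument is independent of \cite{Rec16a} at the cost of a somewhat longer write-up; you also make the uniqueness explicit via monotonicity, which the paper leaves to the general Lipschitz theory. One incidental remark: the paper's proof concludes with ``$y'(t)\in N_{\F(t)}(y(t))$'', whose sign is opposite to the one required by the statement; your sign ($-y'(t)$ in the normal cone) is the correct one.
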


\begin{proof}
We use the notation $\K_\rho := \K + D_\rho$ for $\K \in \Conv_\H$ and $\rho \ge 0$. 
If $y_0 \in \B$ we have that $t_0 = 1$, $y(t) = y_0 = \Proj_\B(y_0) \in \G(t)$ and $y'(t) = 0$ for every $t \in \clint{0,1}$, and we are done. Therefore we assume that $y_0 \not\in \B$, i.e. $t_0 < 1$, thus from 
\cite[Lemma 4.1-(ii)]{Rec16a} and formula \eqref{t_0 s.t. d(y_0,B) = (1-t_0)delta} we deduce that
\begin{equation}\label{y_0 in bordo B_(1-t)}
  y_0 \in \partial \B_{(1-t)\rho} \iff t = t_0,
\end{equation}
that is $t_0$ is the first time when $\partial \B_{(1-t)\rho}$ meets $y_0$. If $t \in \cldxint{t_0,1}$ we have that 
\[
  \norm{y(t) - \Proj_\B(y_0)}{} = \frac{1-t}{1-t_0} \norm{y_0 - \Proj_\B(y_0)}{} = \frac{1-t}{1-t_0}(1-t_0)\rho = (1-t)\rho,
\]
therefore
\begin{equation}\label{y(t) in bordo(B_((1-t)delta)) for t > t_0}
  y(t) \in \partial \B_{(1-t)\rho} \qquad \forall t \in \clint{t_0,1}.
\end{equation}
Therefore, since
\[
  y'(t) =
  \begin{cases}
    0 & \text{if $t \in \opint{0,t_0}$} \\
    \dfrac{1}{1-t_0}(\Proj_{\B}(y_0) - y_0) & \text{if $t \in \opint{t_0,1}$}
  \end{cases},
\]
we infer from \cite[Lemma 4.1-(iii)]{Rec16a} that 
$y'(t) \in N_{\F(t)}(y(t))$ for every $t \in \opint{0,1} \setmeno \{t_0\}$.
\end{proof}


\section{Sweeping processes with arbitrary locally bounded retraction}\label{proofs}

In this section we provide the proofs of the main theorems.

\begin{proof}[Proof of Theorem \ref{main thm}]
Now we assume that $\C$ has bounded local retraction and we recall that 
$\ell_\C : \clsxint{0,\infty} \function \clsxint{0,\infty}$ is defined by
\begin{equation}
  \ell_\C(t) := \ret(\C;\clint{0,t}), \qquad t \ge 0.
\end{equation}
The function $\ell_\C$ is increasing, therefore $\ell_\C^{-1}(\tau)$ is always a (possibly degenerate) interval for every $\tau \in \ell_\C(\clsxint{0,\infty})$ and we can define $\Ctilde : \ell_\C(\clsxint{0,\infty}) \function \Conv_\H$ in the following way: 
\[
  \Ctilde(\tau) :=
    \begin{cases}
      \C(t)& \text{if $\ell_\C^{-1}(\tau)$ is a singleton and $\ell_\C^{-1}(\tau) = \{t\}$} \\
      \C(t+) & \text{if $\ell_\C^{-1}(\tau)$ is not a singleton and $\inf\ell_\C^{-1}(\tau) = t$}.
    \end{cases}
\]
Let us observe that if $\ell_\C$ is not constant on any nondegenerate interval, then one has $\C(t) = \Ctilde(\ell_\C(t))$ 
for every $t \ge 0$, so that $\Ctilde$ can be considered as a reparametrization of $\C$ by the ``arc length'' $\ell_\C$. Let us also observe that $\C$ is set-theoretically increasing in time on the intervals where 
$\ell_\C$ is constant, therefore the solution of the sweeping process driven by $\C$ is expected to be constant on these intervals.
We now claim that $\Ctilde$ is $1$-Lipschitz continuous with respect to $\e$, and to this aim we take $\sigma, \tau \in \ell_\C(\clsxint{0,\infty})$ with $\sigma < \tau$ and assume that 
$s = \inf\ell_\C^{-1}(\sigma) \le \sup\ell_\C^{-1}(\sigma) = s^*$, $t = \inf\ell_\C^{-1}(\tau) \le \sup\ell_\C^{-1}(\tau) = t^*$ for some $s, s^*, t, t^* \ge 0$. If $\ell_\C^{-1}(\sigma)$ and $\ell_\C^{-1}(\tau)$ are both singletons, then 
$\e(\Ctilde(\sigma),\Ctilde(\tau))$ $= \e(\C(s),\C(t)) \le \ret(\C;\clint{s,t}) = \tau - \sigma$. If $\ell_\C^{-1}(\sigma)$ is not a singleton and $\ell_\C^{-1}(\tau)$ is a singleton, then $\e(\Ctilde(\sigma),\Ctilde(\tau)) = \e(\C(s+),\C(t)) = 
\e(\C((s+s^*)/2),\C(t)) \le \ret(\C;\clint{(s+s^*)/2,t}) = \tau - \sigma$. If instead $\ell_\C^{-1}(\sigma)$ is a singleton and 
$\ell_\C^{-1}(\tau)$ is not a singleton, then $\e(\Ctilde(\sigma),\Ctilde(\tau)) = \e(\C(s),\C(t+)) = \e(\C(s),\C((t+t^*)/2)) \le \ret(\C;\clint{s,(t+t^*)/2}) = \tau - \sigma$. Finally if both $\ell_\C^{-1}(\sigma)$ and $\ell_\C^{-1}(\tau)$ are not singletons, then $\e(\Ctilde(\sigma),\Ctilde(\tau)) = \e(\C(s+),\C(t+)) = \e(\C((s+s^*)/2),\C((t+t^*)/2)) \le 
\ret(\C;\clint{(s+s^*)/2,(t+t^*)/2}) = \tau - \sigma$, and we have proved that $\Ctilde$ is $1$-Lipschitz continuous with respect to $\e$. Now we extend the definition
of $\Ctilde$ over the whole $\clsxint{0,\infty}$ by setting 
\begin{alignat}{3}
  & \Ctilde(\sigma) := \F_{(\C(t-),\C(t))}\left(\frac{\sigma - \ell_\C(t-)}{\ell_\C(t) - \ell_\C(t-)}\right) & \qquad
      & \text{if $\sigma \in \opint{\ell_\C(t-), \ell_\C(t)}$, if $\ell_\C(t-) \neq \ell_\C(t)$}, \\
  & \Ctilde(\ell_\C(t-)) := \C(t-) & \qquad 
       & \text{if $\ell_\C(t-) \not\in \ell_\C(\clsxint{0,\infty})$, if $\ell_\C(t-) \neq \ell_\C(t)$}, 
\end{alignat}
and 
\begin{alignat}{3}
   & \Ctilde(\sigma) := \F_{(\C(t),\C(t+))}\left(\frac{\sigma - \ell_\C(t)}{\ell_\C(t+) - \ell_\C(t)}\right) & \qquad
      & \text{if $\sigma \in \cldxint{\ell_\C(t), \ell_\C(t+)}$, if $\ell_\C(t) \neq \ell_\C(t+)$},
\end{alignat}
thus the resulting curve $\Ctilde : \clsxint{0,\infty} \function \Conv_\H$ is a $1$-Lipschitz continuous function 
with respect to $\e$ because  $\F_{(\C(t-),\C(t))}$ and  $\F_{(\C(t),\C(t+))}$ are geodesics connecting respectively
$\C(t-)$ to $\C(t)$ and $\C(t)$ to $\C(t+)$, 
and because $\C(\inf \ell^{-1}(\ell_\C(t-))+) \subseteq \C(t-)$ if $\ell_\C(t-) \in \ell_\C(\clsxint{0,\infty})$.
Therefore we have that $\Sw(\Ctilde) \in \Lip_\loc(\clsxint{0,\infty};\H)$ where $\Sw$
is the solution operator of the Lipschitz sweeping process defined in Theorem \ref{Lip theorem}. Let us set
\begin{equation}\notag
   \yhat := \Sw(\Ctilde, y_0), \qquad y := \Sw(\Ctilde, y_0) \circ \ell_{\C},
\end{equation}
and let us prove that $y$ solves \eqref{constr. arbBV sweep}-\eqref{i.c. arbBV sweep}.
It is obvious that $y(t) \in \C(t)$ when $\ell_\C^{-1}(\ell_{\C}(t)) = \{t\}$. If instead 
$\ell_\C^{-1}(\ell_{\C}(t))$ is not a singleton, we have that 
$y(t) = \yhat(\ell_\C(t)) \in \Ctilde(\ell_\C(t)) = \C(\inf\ell_\C^{-1}(\ell_\C(t))+) \subseteq \C(t)$, thus condition 
\eqref{constr. arbBV sweep} is satisfied. It is very easy to check that \eqref{i.c. arbBV sweep} holds true.
Since $\yhat$ is Lipschitz continuous and $\ell_\C$ is increasing, it is clear that $y \in \BV(\clint{0,T};\H)$, and that $y$ is left continuous (respectively: right continuous) if and only if $\ell_\C$ is left continuous (respectively: right continuous), so that $\discont(y) = \discont(\ell_\C) = \discont(\C)$.
If $v : \clsxint{0,\infty} \function \H$ is defined by
\begin{equation}\label{density w}
  v(t) := 
  \begin{cases}
  \yhat' (\ell_{\C}(t)) & \text{if $t \in \cont(\ell_\C)$} \\
  \ \\
  \dfrac{\yhat(\ell_\C(t+)) - \yhat(\ell_\C(t-))}{\ell_\C(t-) - \ell_\C(t-)} & \text{if $t \in \discont(\ell_\C)$}
  \end{cases},
\end{equation}
then, since $\ell_\C$ is increasing, from Proposition \ref{P:BV chain rule}-(ii) we infer that 
$y  \in \BV_\loc(\clsxint{0,\infty};\H)$ and $\D y = v \D\ell_\C$, i.e. \eqref{Dy = vmu} holds with $\mu = \D \ell_\C$.
Let us set
\begin{equation}\label{Z}
  Z := \{t \in \clsxint{0,\infty}\ :\ -\yhat'(t) \not\in N_{\Ctilde(t)}(\yhat(t))\}.
\end{equation} 
From formula \eqref{diff inclu-lip} we deduce that
\[
  \leb^{1}(Z) = 0,
\]
therefore, thanks to Proposition \ref{P:BV chain rule}-(i), we have that
\begin{align}
  & \D\ell_{\C}(\{t \in \cont(\ell_{\C})\ :\ -v(t) \not\in N_{\C(t)}(y(t))\}) \notag \\
  = & \D\ell_{\C}
         (\{t \in \cont(\ell_\C)\ :\ -\yhat'(\ell_{\C}(t)) \not\in N_{\Ctilde(\ell_{\C}(t))}(\yhat(\ell_{\C}(t))\})  
          \notag \\ 
  = & \D\ell_{\C}(\{t \in \cont(\ell_{\C})\ :\ \ell_{\C}(t) \in Z\}) = \leb^{1}(Z) = 0, \label{Dl_C(...)}
\end{align}
hence \eqref{diff inclu} also holds with $\mu = \D \ell_\C$.
Now let us fix $t \in \discont(\ell_\C)$ and observe that if $\sigma \in \opint{\ell_\C(t-), \ell_\C(t)}$ then
$\Ctilde(\sigma) = \F_{(\C(t-),\C(t))}(\sigma-\ell_\C(t-)/(\ell_\C(t) - \ell_\C(t-)))$, thus by the semigroup property 
of \eqref{constr. arbBV sweep-lip}--\eqref{i.c. arbBV sweep-lip} and by Lemma \ref{L:particular sweeping process} we have 
\begin{align}
  v(t) 
   & = \yhat(\ell_\C(t)) \notag \\
   & = \Sw(\Ctilde, y_0)(\ell_\C(t)) \notag \\
   & = \Sw(\Ctilde(\cdot+\ell_\C(t-), \Sw(\Ctilde,y_0)(\ell_\C(t-))))(\ell_\C(t) - \ell_\C(t-)) \notag \\
   & = \Sw(\F_{(\C(t-),\C(t))}, \yhat(\ell_\C(t-)))(1) \notag \\
   & = \Sw(\F_{(\C(t-),\C(t))}, y(t-))(1) \notag \\
   & = \Proj_{\C(t)}(y(t-)).
\end{align}
On the other hand if $\sigma \in \opint{\ell_\C(t), \ell_\C(t+)}$ then
$\Ctilde(\sigma) = \F_{(\C(t),\C(t+))}(\sigma-\ell_\C(t)/(\ell_\C(t+) - \ell_\C(t)))$, therefore
\begin{align}
  v(t+) 
   & = \yhat(\ell_\C(t+)) \notag \\
   & = \Sw(\Ctilde, y_0)(\ell_\C(t)) \notag \\
   & = \Sw(\Ctilde(\cdot+\ell_\C(t), \Sw(\Ctilde,y_0)(\ell_\C(t))))(\ell_\C(t+) - \ell_\C(t)) \notag \\
   & = \Sw(\F_{(\C(t),\C(t+))}, \yhat(\ell_\C(t)))(1) \notag \\
   & = \Sw(\F_{(\C(t),\C(t+))}, y(t))(1) \notag \\
   & = \Proj_{\C(t+)}(y(t)),
\end{align}
thus \eqref{jump cond} is satisfied and we are done since uniqueness was proved in Section \ref{Uniqueness}.
\end{proof}

Now we can give the  

\begin{proof}[Proof of Theorem \ref{main-right continuous case}]
We know from Theorem \ref{main thm} that $y$ is right continuous, thus we only have to prove formula 
\eqref{diff inclu-r}. If $t \in \discont(u)$ then the first condition of \eqref{jump cond} reads 
\begin{equation}
  \duality{y(t)-\zeta}{y(t) - y(t-)} \le 0 \qquad \forall \zeta \in \C(t+).
\end{equation}
Hence, since $\cont(y) = \cont(\ell_\C)$ and $\ell_\C$ is right continuous, for every $z \in \L^\infty(\clsxint{0,T};\H)$ with 
$z(\clsxint{0,T}) \subseteq \Z$ we have
\begin{align}
  & \int_{\clsxint{0,T}} \duality{y(t)-z(t)}{\de\D y(t)} \notag \\
  & = \int_{\cont(u)\cap\clsxint{0,T}} \duality{y(t)-z(t)}{\de\D y(t)} \notag \\
  & \phantom{= \ } + 
          \sum_{t \in \discont(u)\cap\clsxint{0,T}} \duality{y(t)-z(t)}{y(t) - y(t-)} \le 0 \notag
\end{align}
and we can conclude invoking 
Lemma \ref{L:BV-int. form.}. 
\end{proof}

The proof of Theorem \ref{main-left continuous case} is analogous.





\end{document}